\newtheorem{maintheorem}{Theorem}
\newtheorem{theorem}{Theorem}[section]
\newtheorem{lemma}[theorem]{Lemma}
\newtheorem{remark}[theorem]{Remark}
\newtheorem{question}{Question}
\newtheorem{definition}{Definition}
\newtheorem{example}{Example}
\newtheorem{conjecture}{Conjecture}
\numberwithin{equation}{section}
\theoremstyle{plain}
\theoremstyle{definition}
\newcommand{\RR}{{\mathbb R}}
\newcommand{\NN}{{\mathbb N}}
\newcommand{\ZZ}{{\mathbb Z}}
\renewcommand{\epsilon}{\varepsilon}
\def \cR {{\mathcal R}}
\def \NN {{\mathbb N}}
\def \RR {{\mathbb R}}
\def \ZZ {{\mathbb Z}}
\title [Dense area-preserving homeomorphisms have zero Lyapunov exponents]
{Dense area-preserving homeomorphisms have zero Lyapunov exponents}
\author{M\'ario Bessa}
\email{bessa@fc.up.pt }
\address{Departamento de Matem\'atica da Universidade do Porto
\\
Rua do Campo Alegre 687,
4169-007 Porto, Portugal.\text{ and } ESTGOH-Instituto Polit\'ecnico de Coimbra
\\
Rua General Santos Costa,
3400-124 Oliveira do Hospital, Portugal.}
\author{C\'esar M. Silva}
\email{csilva@ubi.pt}
\address{
Universidade da Beira Interior
\\
Rua Marqu\^es d'\'Avila e Bolama,
  6201-001 Covilh\~a
Portugal.}
\date{\today}
\begin{document}
\begin{abstract}
We give a new definition (different from the one in~\cite{BS}) for a Lyapunov exponent (called \emph{new} Lyapunov exponent) associated to a continuous map. Our first result states that these new exponents coincide with the usual Lyapunov exponents if the map is differentiable. Then, we apply this concept to prove that there exists a $C^0$-dense subset of the set of the area-preserving homeomorphisms defined in a compact, connected and boundaryless surface such that any element inside this residual subset has zero \emph{new} Lyapunov exponents for Lebesgue almost every point. Finally, we prove that the function that associates an area-preserving homeomorphism, equipped with the $C^0$-topology, to the integral (with respect to area) of its top \emph{new} Lyapunov exponent over the whole surface cannot be upper-semicontinuous.
\end{abstract}
\maketitle

\noindent
\textbf{Keywords:} Area-preserving homeomorphisms,
Lyapunov exponents, topological dynamics.

\bigskip

\noindent
\textbf{\textup{2000} Mathematics Subject Classification:}
Primary: 54H20, 28D05 Secondary:
37B99.


\section{Introduction} \label{sec:introd}

\subsection{Ergodicity and smoothness}

Let $S$ be a compact, connected, boundaryless smooth Riemannian surface. Consider an area-form on $S$ and let $\lambdaup$ be the measure induced by it. We call $\lambdaup$ the \emph{Lebesgue measure} on $S$.  Let $\text{Hom\,}_{\lambdaup}(S)$ denote the space of homeomorphisms on $S$ which are $\lambdaup$-invariant, that is, $\lambdaup(h(B))=\lambdaup(B)$ for any Borelian set $B$. The space $\text{Hom\,}_{\lambdaup}(S)$ is metrizable with the metric
$$d(f, g) := \sup\{d(f(x), g(x)),d(f^{-1}(x), g^{-1}(x)) \colon x \in S\},$$
where $f$ and $g$ are in $\text{Hom\,}_{\lambdaup}(S)$, and this metric induces in $\text{Hom\,}_{\lambdaup}(S)$ a topology called the $C^0$ topology. We endow $\text{Hom\,}_{\lambdaup}(S)$ with the $C^0$ topology obtaining the Baire space $(\text{Hom\,}_{\lambdaup}(S),C^0)$. A property is said to be \emph{generic} if it holds in a residual subset, that is, the subset of points in which the property is valid contains a dense $G_{\delta}$ and, in particular, by Baire's Category theorem (see~\cite{Ku}) is $C^0$-dense in $\text{Hom\,}_{\lambdaup}(S)$.

We recall that $h\in \text{Hom\,}_{\lambdaup}(S)$ is said to be \emph{ergodic} if any $h$-invariant set has zero or full Lebesgue measure. In the early 1940's Oxtoby and Ulam ~\cite{OU} proved the following remarkable result which is stated for two-dimensional manifolds but is also true for higher dimensions.

\begin{theorem}\label{OU}
(Oxtoby-Ulam) Ergodicity holds in a residual subset of $\text{Hom\,}_{\lambdaup}(S)$.
\end{theorem}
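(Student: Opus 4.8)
Let $\cE\subset\text{Hom\,}_{\lambdaup}(S)$ denote the set of ergodic homeomorphisms. Since $(\text{Hom\,}_{\lambdaup}(S),C^0)$ is a Baire space (as noted above), it suffices to show that $\cE$ is a dense $G_\delta$. The plan is to get the $G_\delta$ property by a soft functional-analytic argument and to get density from the Oxtoby--Ulam/Lax measure-preserving approximation machinery, which is where all the real work lies.

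\emph{Step 1: $\cE$ is a $G_\delta$.} I would fix a countable dense set $\{f_i\}_{i\in\NN}\subset C(S,\RR)$ and, for $i,m\in\NN$, set
\[
G_{i,m}\ :=\ \bigcup_{N\ge 1}\ \Bigl\{\,h:\ \int_S\Bigl(\tfrac1N\sum_{n=0}^{N-1}f_i\circ h^n-\int_S f_i\,d\lambdaup\Bigr)^{\!2}\,d\lambdaup\ <\ \tfrac1m\,\Bigr\}.
\]
For each fixed $N$ the map $h\mapsto\int_S(\tfrac1N\sum_{n<N}f_i\circ h^n-\int_S f_i\,d\lambdaup)^2\,d\lambdaup$ is $C^0$-continuous (if $h_k\to h$ and $h_k^{-1}\to h^{-1}$ uniformly, then $h_k^{\,n}\to h^{\,n}$ uniformly for each $n$ by induction using the uniform continuity of $h$, so $f_i\circ h_k^{\,n}\to f_i\circ h^{\,n}$ uniformly and bounded convergence applies), hence each $G_{i,m}$ is open. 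The key identity will be $\cE=\bigcap_{i,m}G_{i,m}$. If $h\in\cE$, the von Neumann mean ergodic theorem gives $\tfrac1N\sum_{n<N}f_i\circ h^n\to\int_S f_i\,d\lambdaup$ in $L^2(\lambdaup)$, so $h\in G_{i,m}$ for all $i,m$. Conversely, decompose $f_i-\int_S f_i\,d\lambdaup=a_i+b_i$ in $L^2(\lambdaup)$ with $a_i$ invariant ($a_i\circ h=a_i$) and $b_i$ orthogonal to every invariant function; since $\langle b_i\circ h^n,a_i\rangle=\langle b_i,a_i\rangle=0$, for every $N$
\[
\Bigl\|\tfrac1N\sum_{n<N}f_i\circ h^n-\int_S f_i\,d\lambdaup\Bigr\|_{2}^{2}\ =\ \|a_i\|_{2}^{2}+\Bigl\|\tfrac1N\sum_{n<N}b_i\circ h^n\Bigr\|_{2}^{2}\ \ge\ \|a_i\|_{2}^{2}.
\]
Thus $h\in G_{i,m}$ for all $m$ forces $a_i=0$; since $\{f_i\}$ is dense in $C(S)$, hence in $L^2(\lambdaup)$, this forces the orthogonal projection onto the invariant functions to be $f\mapsto\int_S f\,d\lambdaup$, i.e. the invariant $\sigma$-algebra is trivial, i.e. $h\in\cE$. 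So $\cE$ is a $G_\delta$.

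\emph{Step 2: $\cE$ is $C^0$-dense.} Given $h$ and $\epsilon>0$ I would produce an ergodic $g$ with $d(g,h)<\epsilon$ in three moves. First, by the Oxtoby--Ulam normalization theorem, fix a triangulation of $S$ together with a $\lambdaup$-preserving homeomorphism carrying $\lambdaup$ to the standard Lebesgue measure on each simplex, so that $S$ carries a nested sequence $\cP_1\prec\cP_2\prec\cdots$ of finite partitions into connected ``cells'' of equal $\lambdaup$-measure with $\diam(\cP_M)\to 0$; connectedness and emptiness of $\partial S$ enter here. Second, by Lax's approximation theorem, approximate $h$ within $\epsilon/3$ by a $\lambdaup$-preserving homeomorphism $h_1$ permuting the cells of some $\cP_{M_1}$. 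Third, run the Oxtoby--Ulam inductive construction: correct $h_1$ on finer and finer scales by $C^0$-small $\lambdaup$-preserving homeomorphisms supported in unions of adjacent cells — these exist by the measure-preserving transport lemma, valid because $\lambdaup$ is non-atomic, positive on nonempty open sets, and $S$ is connected and boundaryless — so that the limit $g$ has $d(g,h)<\epsilon$ and its orbits sweep every cell of every $\cP_M$ with the asymptotically correct frequency, hence $g$ is ergodic. With Step 1 and Baire's theorem, $\cE$ is then residual.

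\emph{Main obstacle.} The hard part will be Step 2, and within it the classical approximation apparatus: the Oxtoby--Ulam normalization (that $\lambdaup$ is, up to homeomorphism, standard), Lax's theorem (that cell-permuting homeomorphisms are $C^0$-dense), and above all the inductive ``ergodic approximation'' together with the measure-preserving transport lemma underlying it. These are the genuinely delicate, surface-topology- and measure-theory-heavy ingredients, and they are exactly where the hypotheses (compact, connected, boundaryless, $\lambdaup$ induced by an area form) are used. By comparison Step 1 is a routine repackaging of the mean ergodic theorem; the only mild care needed there is to carry the symmetrized metric $d$, rather than the one-sided sup-metric, through the continuity and approximation statements.
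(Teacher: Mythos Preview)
The paper does not prove Theorem~\ref{OU} at all: it is quoted as the classical Oxtoby--Ulam theorem, with a citation to~\cite{OU} and a pointer to~\cite{AP,AP2} for background, and is used only as input to the discussion and to Conjecture~\ref{conjecture}. So there is no ``paper's own proof'' to compare your proposal against.

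That said, your outline is a faithful sketch of the standard modern treatment (essentially the Alpern--Prasad presentation). Step~1 is correct as written: the decomposition $f_i-\int f_i=a_i+b_i$ together with $\langle b_i\circ h^n,a_i\rangle=0$ gives the lower bound $\|a_i\|_2^2$ for every $N$, and density of $\{f_i\}$ in $L^2(\lambdaup)$ forces the invariant $\sigma$-algebra to be trivial. Step~2 is, as you say, where the work is, and your outline invokes exactly the right ingredients (Oxtoby--Ulam normalization of $\lambdaup$, Lax-type approximation by cell permutations, and the inductive ergodic-approximation construction). One small wording caveat: in the Lax step the cell permutation is not itself a homeomorphism; what you need is that any cyclic permutation of cells can be \emph{realized} by a $\lambdaup$-preserving homeomorphism $C^0$-close to $h$, which is part of the Oxtoby--Ulam transport machinery you cite next. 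With that adjustment the plan is sound, but since the paper treats the theorem as a black box, none of this is needed here.
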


We refer to Alpern and Prasad's work ~\cite{AP,AP2} and the references wherein for a historical background.

Since Poincar\'e, area-preserving discrete dynamical systems in surfaces have been often used in Classical and Celestial Mechanics, see e.g.~\cite[Appendix 9]{Ar78}. If we assume that the dynamical system is of class $C^r$ ($r\geq 3$), then the picture given by Theorem~\ref{OU} changes radically. In fact, by KAM theorem (see e.g. ~\cite{Y}), we obtain the prevalence of dynamically invariant circles supporting irrational rotations. Since these invariant circles have positive measure we conclude that being ergodic is not an open condition.

In this paper we are interested in understanding the dynamics for \emph{most} surface homemorphisms which preserve the Lebesgue measure.

\begin{subsection}{Dissipative \emph{versus} conservative dynamics}

For the larger class of \emph{dissipative} homeomorphisms in surfaces we have, for example, the following generic results:
\begin{enumerate}
\item [(A)] There exists a residual subset such that the closure of periodic points is equal to the non-wandering points (see~\cite{H});
\item [(B)] The property that $f$ has infinitely many periodic points of some finite period holds in a residual subset (see~\cite{PPSS});
\item [(C)] Generic homeomorphisms have some iteration with infinite topological entropy (see~\cite{K}).
\end{enumerate}

See ~\cite{AHK} for a very complete survey on the dissipative setting. We observe that some of the previous results are  available in the literature and concerning the area-preserving case, e.g., (A) was proved in ~\cite[Proposition 4]{DF}. Actually, in ~\cite{AD,DF} it was proved that chaotic homeomorphisms are abundant in the conservative setting. Here \emph{chaotic} is in the sense of Devaney (see~\cite{BBCDS}).

Since the lack of differentiability is a $C^0$-generic property it is natural to believe that the dynamics of homeomorphisms is very different from the dynamics of diffeomorphisms. For example, statement (B) above says that one of the consequences of the Kupka-Smale Theorem (see \cite{R} for the volume-preserving case) about finiteness of periodic points of a given period, for $C^r$ ($r\geq 1$), no longer holds in the  $C^0$-topology. In fact, hyperbolicity can be destroyed if a  small $C^0$-perturbation is allowed.
\end{subsection}

\begin{subsection}{\emph{New} Lyapunov exponents and statement of the results}

The stable/unstable manifold theory for homeomorphisms is very difficult to obtain and, as far as we know, holds in very special cases (see~\cite{BSl}). One standard way to obtain stable/unstable manifolds is by assuming non-zero Lyapunov exponents and then using Pesin's theory (\cite{BP}), which requires at least H\"{o}lder regularity\footnote{ It is worth to point out that, by recent results of Abdenur, Bonatti and Crovisier, (see~\cite{ABC}), for any $C^{1}$ diffeomorphism, with non-zero Lyapunov exponents and with a dominated splitting (see ~\cite{BDV} for the definition) with an index compatible with the non-uniformly hyperbolic splitting, we obtain a rich information about geometric
properties of the system, namely stable/unstable manifold theory. } of the tangent map. These assumptions imply some of the most important results in the mo\-dern theory of dynamical systems (\cite{BP}). We recall that the Lyapunov exponents measure the asymptotic behavior of the tangent map of a given dynamical system. A new approach and some additional care is needed if one want to define Lyapunov exponents for homeomorphisms.

In ~\cite{BS}, the second author and Barreira were able to define Lyapunov exponents adapted to the continuous case. In \S \ref{NLE} we develop another concept of Lyapunov exponents for continuous transformations, that we call \emph{new} Lyapunov exponents, and we will prove:

\begin{maintheorem}
  \label{main0}
If $f$ is a differentiable map then, for all $x \in S$ and $v \in T_x S$, we have that the Lyapunov exponents coincide with the \emph{new} Lyapunov exponents.
\end{maintheorem}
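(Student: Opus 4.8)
The plan is to compare the two quantities iterate by iterate. Write $\chi(x,v)=\limsup_{n\to\infty}\frac1n\log\|D_xf^n v\|$ for the classical exponent, and recall that the new exponent $\widetilde\chi(x,v)$ is built (see \S\ref{NLE}) from the finite differences $d\big(f^n(x),f^n(\exp_x(\varepsilon v))\big)/\varepsilon$ along the geodesic ray in direction $v$, the scale $\varepsilon$ being sent to $0$ and the time $n$ to $\infty$. Since $S$ is compact I would first fix a finite atlas and observe that in any chart the Riemannian distance $d$ is bi-Lipschitz to the Euclidean distance, with constants tending to $1$ as the diameter of the region shrinks; hence any factor that is subexponential in $n$ and controlled by such constants is invisible to the limits $\frac1n\log(\cdot)$, and we may compute in a fixed chart near $x$ and a fixed chart near $f^n(x)$, replacing $\exp_x(\varepsilon v)$ by $x+\varepsilon v$ in coordinates. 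I would isolate this reduction as a preliminary lemma. Note that differentiability of $f$ at every point of the orbit already gives, by the chain rule, differentiability of each $f^n$ at $x$, so no $C^1$ hypothesis is needed for what follows.

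The heart of the proof is the Taylor expansion supplied by differentiability: for each fixed $n$, in coordinates,
$$f^n(x+\varepsilon v)=f^n(x)+\varepsilon\,D_xf^n v+r_n(x,\varepsilon v),\qquad |r_n(x,\varepsilon v)|=o(\varepsilon)\quad(\varepsilon\to0).$$
If the definition of $\widetilde\chi$ takes the limit in $\varepsilon$ inside the time average, then dividing by $\varepsilon$ and letting $\varepsilon\to0$ immediately gives $d\big(f^n(x),f^n(\exp_x(\varepsilon v))\big)/\varepsilon\to\|D_xf^n v\|$ and the exponents coincide at once. The substantive case is the opposite ordering, $\widetilde\chi(x,v)=\lim_{\varepsilon\to0}\limsup_{n\to\infty}\frac1n\log\big(d(f^n(x),f^n(\exp_x(\varepsilon v)))/\varepsilon\big)$ (or with $\inf_\varepsilon$ or $\sup_\varepsilon$ in place of the outer $\lim$), and there I would prove the two inequalities separately. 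For $\widetilde\chi\ge\chi$: from the expansion $d(f^n(x),f^n(\exp_x(\varepsilon v)))\ge\varepsilon\|D_xf^n v\|-|r_n(x,\varepsilon v)|$, so along a subsequence realizing $\chi(x,v)$, on which $\|D_xf^n v\|$ grows exponentially, a sufficiently small and in general $n$-dependent choice of $\varepsilon$ forces $|r_n|\le\frac12\varepsilon\|D_xf^n v\|$, whence $\frac1n\log\big(d(\cdots)/\varepsilon\big)\ge\frac1n\log\big(\tfrac12\|D_xf^n v\|\big)$; sending $\varepsilon\to0$ afterwards removes the dependence. For $\widetilde\chi\le\chi$ I would use $d(f^n(x),f^n(\exp_x(\varepsilon v)))\le\varepsilon\|D_xf^n v\|+|r_n(x,\varepsilon v)|$ and the dual estimate, so that the right-hand side is at most $\varepsilon\|D_xf^n v\|$ times a subexponential factor. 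Running the same two estimates with $\liminf$ in place of $\limsup$ shows the lower exponents coincide as well; the case $v=0$ is trivial.

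The main obstacle is exactly the interchange of $\varepsilon\to0$ and $n\to\infty$: the remainder $r_n(x,\varepsilon v)=o(\varepsilon)$ is not uniform in $n$, since the modulus of differentiability of $f^n$ degrades (potentially exponentially) with $n$, so one cannot freeze a single $\varepsilon$. The remedy is to let $\varepsilon$ shrink along the chosen subsequences — a diagonal choice $\varepsilon=\varepsilon_n\to0$ slow enough that $\frac1n\log\varepsilon_n\to0$ yet fast enough that $|r_n(x,\varepsilon_n v)|\le\frac12\varepsilon_n\|D_xf^n v\|$ — and then to note that the outer $\lim_{\varepsilon\to0}$ (respectively $\inf_\varepsilon$ or $\sup_\varepsilon$) in the definition of $\widetilde\chi$ absorbs this choice. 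The only remaining point requiring care is the uniform bookkeeping of the bi-Lipschitz constants between $d$ and the chart metrics as the neighbourhoods shrink, which is routine. If the definition in \S\ref{NLE} already orders the limits so that the one-sided bound $\limsup_n\frac1n\log\big(d(\cdots)/\varepsilon\big)\ge\chi(x,v)$ holds for every fixed $\varepsilon>0$, then only the reverse inequality needs the diagonal argument and the proof shortens accordingly.
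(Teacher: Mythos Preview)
Your proposal is correct, and the case you dismiss in one line --- ``if the definition of $\widetilde\chi$ takes the limit in $\varepsilon$ inside the time average, then \ldots\ the exponents coincide at once'' --- is precisely the paper's definition and the paper's proof. In the paper the new exponent is
\[
\chi^+_N(f,x,v)=\limsup_{n\to+\infty}\,\lim_{\delta\to 0}\,\frac{1}{n}\log\sup_{y\in(B_x(\delta,n)\setminus\{x\})\cap L_{x,v}}\frac{\|f^n(x)-f^n(y)\|}{\|x-y\|},
\]
so the scale limit is taken \emph{with $n$ fixed}, and differentiability of $f^n$ at $x$ alone forces the inner expression to equal $\frac{1}{n}\log\|Df^n_x|_{L_{x,v}}\|$. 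The paper writes out the two inequalities via the Taylor expansion $f^n(y)=f^n(x)+Df^n_x(y-x)+r_n(y-x)$ exactly as you sketch for that case. The paper even remarks that this ordering of the limits (opposite to that in \cite{BS}) was chosen precisely so that this direct argument goes through; with the other ordering the question is left open. Hence your diagonal argument, while plausible, is not needed here.

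Two small mismatches worth noting. First, the paper's definition takes a \emph{supremum} over $y$ in the dynamical ball intersected with the line $L_{x,v}$, not the single point $\exp_x(\varepsilon v)$; this is why the upper bound in the paper requires the remainder estimate $\sup_{y}\|r_n(y-x)\|/\|y-x\|\to 0$ as $\delta\to 0$, rather than just a pointwise limit. Second, the paper fixes a finite Darboux atlas in advance and computes everything in Euclidean coordinates, so the exponential map and the bi-Lipschitz bookkeeping you propose are replaced by a once-and-for-all choice of chart; the content is the same.
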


Once we have a good definition of Lyapunov exponents it is tempting to use non-zero Lyapunov exponents for area-pre\-ser\-ving homeomorphisms in order to obtain some kind of stable/unstable manifold. However, our next result says that, unfortunately, in $\text{Hom\,}_\lambdaup(S)$, we must be very careful since this property is rare. Actually, having non-zero Lyapunov exponents is not a $C^0$-open property and this is the content of Theorem~\ref{main}.

In order to state Theorem~\ref{main} let us consider the following function

\begin{equation}\label{entropy}
\begin{array}{cccc}
\Lambda\colon & \text{Hom\,}_\lambdaup (S) & \longrightarrow & [0,+\infty[ \\
& h & \longmapsto & \int_S \chi_N^{+}(h,x)d\lambdaup(x),
\end{array}
\end{equation}
where $\text{Hom\,}_\lambdaup (S)$ is endowed with the $C^0$-topology and $\chi_N^{+}$ is the top \emph{new} Lyapunov exponent (to be defined in \S\ref{NLE}). We say that $h\in \text{Hom\,}_\lambdaup (S)$ has zero top \emph{new} Lyapunov exponents for $\lambdaup$-a.e. point in $S$, if $\Lambda(h)=0$.

\begin{maintheorem}
  \label{main}
  Given any $h\in \text{Hom\,}_\lambdaup(S)$ and $\epsilon>0$, there exists $g\in \text{Hom\,}_\lambdaup(S)$,  $\epsilon$-$C^0$-close to $h$, such that $\Lambda(g)=0$.
  \end{maintheorem}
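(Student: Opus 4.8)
The plan is to exploit the $C^0$-flexibility of area-preserving homeomorphisms, namely that one can $C^0$-approximate any $h\in\text{Hom}_\lambdaup(S)$ by a homeomorphism $g$ whose dynamics is, up to small error, a rigid rearrangement of pieces of $S$ with no exponential stretching. The key dynamical input is that the \emph{new} Lyapunov exponent $\chi_N^+(g,x)$ should be shown to vanish whenever $g$ behaves locally like an isometry (or, more generally, like a map with subexponential distortion) along the orbit of $x$ — this is precisely where Theorem~\ref{main0} and the construction of $\chi_N^+$ in \S\ref{NLE} are used: the definition of $\chi_N^+$ is phrased in terms of how fast $g$ separates nearby points, so a uniform modulus-of-continuity bound that is subexponential forces $\chi_N^+=0$. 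Thus it suffices to produce, $\epsilon$-$C^0$-close to $h$, a homeomorphism $g$ for which $\limsup_n \frac1n\log(\text{local expansion of }g^n\text{ near }x)=0$ for $\lambdaup$-a.e.\ $x$, and then integrate: $\Lambda(g)=\int_S\chi_N^+(g,x)\,d\lambdaup=0$.

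First I would reduce to a combinatorial/geometric normal form. Using a fine triangulation (or handle/cube decomposition) of $S$ into $\lambdaup$-small pieces, and the Oxtoby–Ulam type flexibility results (as in \cite{OU,AP,AP2}), I would approximate $h$ in the $C^0$-metric $d$ by a homeomorphism $g$ that permutes (or quasi-permutes, up to $\epsilon$) these small pieces by maps that are bi-Lipschitz with constants uniformly bounded, say by $L$, independently of the piece. Iterating, $g^n$ restricted to any small piece is then bi-Lipschitz with constant at most $L^n$ a priori — which is \emph{not} good enough — so the real work is to arrange the piece-maps so that compositions along orbits do not actually accumulate distortion. The clean way to do this is to make each piece-map an \emph{isometry} onto its image with respect to a flat metric on the pieces (cut-and-paste by translations/rotations), so that $g^n$ is an isometry off the (small-measure) boundary skeleton $\Sigma=\bigcup g^{-k}(\partial\text{pieces})$; for $x\notin\bigcup_{k\ge0}g^{-k}\Sigma$, which is a full-measure set since $\lambdaup(\Sigma)=0$ and $g$ preserves $\lambdaup$, the local expansion of $g^n$ near $x$ stays bounded, hence $\chi_N^+(g,x)=0$.

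The main obstacle is the interface between the flat cut-and-paste picture and the genuine requirements of being (i) a homeomorphism of the \emph{smooth} surface $S$, (ii) \emph{exactly} $\lambdaup$-preserving, and (iii) $\epsilon$-$C^0$-close to the \emph{given} $h$. Pure translations of polygonal pieces will generally fail to match up continuously along the cut edges, so one must interpolate in thin collars around $\Sigma$; these interpolations reintroduce distortion, and one has to check that, because the collars have total measure $\to0$ and a point's orbit meets them with controlled frequency (or, by shrinking collars, can be arranged to contribute a vanishing exponential rate), the new Lyapunov exponent is still $0$ a.e. I would handle (ii) by composing with a measure-correcting homeomorphism supported in a small set, using the Oxtoby–Ulam / Alpern–Prasad machinery, which keeps the $C^0$-size and the a.e.\ subexponential-distortion property. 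For (iii), the point is that $h$ itself is uniformly continuous, so a sufficiently fine decomposition lets us choose the piece-images to lie within $\epsilon$ of where $h$ sends them (and symmetrically for inverses), giving $d(g,h)<\epsilon$. Assembling these — fine decomposition, isometric rearrangement approximating $h$, thin-collar interpolation with controlled orbit-frequency, and a final measure correction — yields the desired $g$ with $\Lambda(g)=0$, and I expect the bookkeeping on the collars (step three) to be where most of the technical care is needed.
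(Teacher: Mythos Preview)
Your approach is quite different from the paper's, and as written it has a real gap at the collar step. The paper's proof is short and indirect: it first uses the Oh--Sikorav smoothing theorem to $C^0$-approximate $h$ by an area-preserving $C^1$ diffeomorphism $f_1$; if $f_1$ happens to lie near the $C^1$-open Anosov region, a local pasting (Lemma~\ref{NA}, via the weak Pasting Lemma of~\cite{AM}) replaces it by a non-Anosov $f_2$ that is still $C^0$-close; then the Bochi--Ma\~n\'e theorem~\cite{Boc} supplies a $C^1$-nearby (hence $C^0$-nearby) diffeomorphism $f_3$ with $\chi^+(f_3,x)=0$ for $\lambdaup$-a.e.\ $x$; finally Theorem~\ref{main0} converts this into $\chi_N^+(f_3,x)=0$. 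The hard dynamics is entirely outsourced to the existing $C^1$ theory, and Theorem~\ref{main0} is precisely the bridge that lets one read off the \emph{new} exponents from the classical ones for the differentiable $f_3$.

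The gap in your construction is this. In your ``clean'' picture the map is piecewise isometric and the exceptional set $\bigcup_{k\ge 0} g^{-k}\Sigma$ has measure zero; but that map is \emph{not} a homeomorphism. Once you interpolate in collars to make it a genuine element of $\text{Hom\,}_\lambdaup(S)$, the bad set is no longer the measure-zero skeleton $\Sigma$ but a collar $C$ of positive measure, and by Birkhoff's theorem the orbit of $\lambdaup$-a.e.\ $x$ visits $C$ with asymptotic frequency $\approx \lambdaup(C)>0$. Each visit contributes a local Lipschitz factor $L>1$ from the interpolation (if $L=1$ the piece-isometries would match on shared edges and $g$ would be globally isometric), so for a.e.\ $x$ you only obtain $\chi_N^+(g,x)\le \lambdaup(C)\log L$, which is small but strictly positive. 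Shrinking the collars yields a \emph{sequence} $g_\delta$ with $\Lambda(g_\delta)\to 0$, not a single $g$ with $\Lambda(g)=0$ as the theorem demands; and you cannot pass to a limit by semicontinuity --- indeed Theorem~\ref{main2} shows that $\Lambda$ is \emph{not} upper-semicontinuous in the $C^0$ topology. Your phrase ``can be arranged to contribute a vanishing exponential rate'' would require a mechanism forcing either the collar-visit frequency or the per-visit distortion to decay along individual orbits, and nothing in the cut-and-paste picture supplies that. The paper sidesteps the whole issue by never leaving the differentiable category until the very last line.
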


We mention that Bochi \cite{M2,Boc} proved that $C^1$-generically we have the dichotomy: zero Lyapunov exponents \emph{versus} hyperbolicity. Furthermore, in a remarkable work, Viana (see~\cite{V}), proved that for the $C^{\alpha}$ ($\alpha>0$) setting of discrete-time cocycles the hyperbolicity prevails. Actually, for the $C^{1+\alpha}$-diffeomorphisms case (which corresponds to the $C^{\alpha}$ tangent dynamical cocycle) there is no proof yet. On the other hand, in~\cite{AB}, using a result by Arnold and Cong (see  \cite{AC}) it is proved that for weaker topologies generic conservative cocycles have zero Lyapunov exponents which somehow is related to our result.

\medskip

\begin{center}
\small{\begin{tabular}{|c|c|c|c|r|}
	\hline
 weak topologies ($C^0$, $L^p$) & $C^1$-topology   &   strong topologies ($C^{1+\alpha}$)  \\
	\hline
{\tiny Theorem~\ref{main} (dense), Arbieto-Bochi (\cite{AB}) (generic)}   & {\tiny Bochi-Ma\~n\'e-Viana (\cite{M2,Boc,BV2})} & {\tiny Viana (\cite{V})} \\
	\hline
{\tiny One-point spectrum (o.p.s.)}   & {\tiny o.p.s. versus hyperbolicity} & {\tiny hyperbolicity} \\
	\hline
\end{tabular}}
\end{center}

\medskip

Our proof of Theorem~\ref{main} is supported in~\cite{Boc} and in a \emph{smoothing} result of area-preserving homeomorphisms in low dimension\footnote{\, We would like to thank Carlos Matheus for pointing us the smoothing result of Yong-Geun Oh.} (see~\cite{Oh,S}). Notice that these results only work in dimension two and three. For that recall that, due to Munkres theorem ~\cite{Mu}, in dimension $n\geq 3$ there are (dissipative) homeomorphisms which are not $C^0$-approximated by diffeomorphisms and this is a crucial step to obtain the aforementioned smoothing statement.

As a simple example of ergodic surface maps with zero Lyapunov exponents we have the uniquely ergodic rationally independent translation on the torus. As a consequence of Theorem~\ref{OU} and of Theorem~\ref{main} we obtain that maps with one of these properties (or eventually both properties) are abundant in surfaces, at least, in the $C^0$ sense. If we had a version of Theorem~\ref{main} for \emph{residual} instead of \emph{dense}, then combining Theorem~\ref{main} with Theorem~\ref{OU} we should obtain a generic version of Theorem~\ref{main}.

\begin{conjecture}\label{conjecture}
There exists a residual set $\cR\subset \text{Hom\,}_\lambdaup(S)$ such that, for every $f\in\cR$, $f$ is ergodic and, moreover, Lebesgue almost every point in $S$ has zero \emph{new} Lyapunov exponents.
\end{conjecture}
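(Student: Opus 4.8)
The plan is to deduce Conjecture~\ref{conjecture} from the combination of Theorem~\ref{OU} with a \emph{residual} strengthening of Theorem~\ref{main}. Since $(\text{Hom\,}_\lambdaup(S),C^0)$ is a Baire space and a countable intersection of residual sets is residual, and since Theorem~\ref{OU} already supplies a residual set $\cE$ of ergodic homeomorphisms, it suffices to prove that
$Z:=\{h\in\text{Hom\,}_\lambdaup(S)\colon\Lambda(h)=0\}$
is residual; then $\cR:=\cE\cap Z$ does the job, because $\Lambda(f)=0$ means exactly that $f$ has zero top \emph{new} Lyapunov exponents for $\lambdaup$-a.e. point. By Theorem~\ref{main} we already know $Z$ is $C^0$-dense, so the whole problem reduces to exhibiting a dense $G_\delta$ set contained in $Z$.

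For the $G_\delta$ construction, write $\chi_N^{+}(h,x)=\limsup_{n\to\y}\tfrac1n\log\psi_n(h,x)$, where $\psi_n(\cdot,\cdot)$ are the finite-time functionals entering the definition of the top \emph{new} Lyapunov exponent in \S\ref{NLE}, which — as one checks from that definition — depend continuously on $h$ in the $C^0$-topology for each fixed $n$. For integers $N,k\ge1$ set
$$\cV_{N,k}=\bigcup_{n\ge N}\Bigl\{\,h\colon\ \lambdaup\bigl(\{x\in S\colon\tfrac1n\log\psi_n(h,x)\ge\tfrac1k\}\bigr)<\tfrac1k\,\Bigr\}.$$
First I would show each $\cV_{N,k}$ is open: for fixed $n$ and a level chosen outside the (countable) set of atoms of the distribution function of $x\mapsto\psi_n(h,x)$, the map $h\mapsto\lambdaup(\{x\colon\psi_n(h,x)\ge e^{n/k}\})$ is upper semicontinuous, so the displayed sets, and their union, are open. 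Density of $\cV_{N,k}$ follows from Theorem~\ref{main}: if $\Lambda(h)=0$ then $\chi_N^{+}(h,\cdot)=0$ $\lambdaup$-a.e., so an Egorov/dominated-convergence argument applied to $\sup_{n\ge N}\tfrac1n\log\psi_n(h,\cdot)$ gives, for $N$ large, $\lambdaup(\{x\colon\tfrac1n\log\psi_n(h,x)\ge\tfrac1k\})<\tfrac1k$ for \emph{every} $n\ge N$, hence $h\in\cV_{N,k}$; and such $h$ are dense. Thus $\cG:=\bigcap_{N,k}\cV_{N,k}$ is a dense $G_\delta$. It remains to prove $\cG\subseteq Z$: if $h\in\cG$, then for each $k$ there is $n_j\to\y$ with $\lambdaup(\{x\colon\tfrac1{n_j}\log\psi_{n_j}(h,x)\ge\tfrac1k\})<\tfrac1k$; passing to the limit along $n_j$ (Fatou for the $\liminf$ of these superlevel sets) yields $\lambdaup(\{x\colon\chi_N^{+}(h,x)>\tfrac1k\})\le\tfrac1k$, and since $k\mapsto\lambdaup(\{x\colon\chi_N^{+}(h,x)>\tfrac1k\})$ is nondecreasing and bounded by $\tfrac1k$ it is identically $0$; hence $\chi_N^{+}(h,\cdot)=0$ $\lambdaup$-a.e. and $\Lambda(h)=0$. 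Intersecting with $\cE$ closes the argument.

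The main obstacle — and the reason this is stated only as a conjecture — lies in the two steps requiring \emph{uniform} control of the $\psi_n$. The inclusion $\cG\subseteq Z$ needs the normalized logarithms $\tfrac1n\log\psi_n(h,\cdot)$ to converge $\lambdaup$-a.e., not merely to have $\limsup$ equal to $\chi_N^{+}$ — i.e. a Kingman/Oseledets-type regularity for the \emph{new} exponents; and the openness of $\cV_{N,k}$ needs the family $\{\psi_n(h,\cdot)\}$ to be dominated uniformly for $h$ in a $C^0$-neighbourhood. Both are in tension with the failure of upper semicontinuity of $\Lambda$ proved in this paper, which shows precisely that near an arbitrary homeomorphism no such uniformity can hold. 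The natural remedy is to perform the construction not on all of $\text{Hom\,}_\lambdaup(S)$ but on a smaller, still residual, subclass on which the required structure is available — for instance the $C^0$-limits of $C^1$-diffeomorphisms furnished by the Oh--Sikorav smoothing (\cite{Oh,S}), where Oseledets' theorem and Bochi's dichotomy (\cite{Boc}) are at hand and can be transferred back. Making Bochi's $C^1$-small perturbations $C^0$-uniform over a neighbourhood, consistently with the smoothing, is exactly the point that remains open, and I expect it to be the hard part.
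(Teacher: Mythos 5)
This statement is posed in the paper as an open conjecture: the authors give no proof, and indeed their Theorem~\ref{main2} is devoted to showing that the natural route to it breaks down. So the only fair assessment of your proposal is whether it closes the gap the authors left open, and it does not. Your reduction is the same one the paper has in mind (intersect the Oxtoby--Ulam residual $\cE$ of Theorem~\ref{OU} with the set $Z=\{\Lambda=0\}$, and note that by Theorem~\ref{main} the whole issue is to upgrade density of $Z$ to residuality), but the two steps you yourself flag as needing ``uniform control'' are exactly where the argument fails, and they cannot be repaired in the form you propose. First, openness of $\cV_{N,k}$ would require the map $h\mapsto\lambdaup\bigl(\{x\colon\psi_n(h,x)\ge e^{n/k}\}\bigr)$ to be upper semicontinuous in the $C^0$-topology; but $\psi_n(h,x)$ is (after the $\delta\to0$ limit) a local dilation coefficient of $h^n$, and the construction used to prove Theorem~\ref{main2} --- Katok's nonuniformly hyperbolic disc map rescaled into $n^2$ tiny discs and glued to the identity --- produces homeomorphisms arbitrarily $C^0$-close to the identity whose finite-time expansion exceeds any fixed threshold on a set of measure bounded away from zero. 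Hence these superlevel-measure functionals jump upward under arbitrarily small $C^0$-perturbations, the sets $\cV_{N,k}$ are not open at the identity, and no choice of level ``outside the atoms'' fixes this, since the failure is a failure of any uniform domination of $\psi_n$ over a $C^0$-neighbourhood, not a boundary-level artifact.

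Second, even granting the $\cV_{N,k}$ were open and dense, the inclusion $\cG\subseteq Z$ does not follow: $\chi_N^{+}(h,x)$ is defined as a $\limsup$ in $n$, so knowing that for each $k$ there exist infinitely many $n_j$ with $\lambdaup\bigl(\{\tfrac1{n_j}\log\psi_{n_j}\ge\tfrac1k\}\bigr)<\tfrac1k$ says nothing about the behaviour along the other times $n$, where the $\limsup$ may well be attained; your Fatou argument controls only the $\liminf$ of the superlevel sets. Replacing the $\limsup$ by a genuine limit would require a Kingman/Oseledets-type statement for the new exponents, which the paper obtains (Theorem~\ref{teo:invariancia}) only under integrability hypotheses that are themselves not known to hold on a residual set, let alone uniformly. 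In short: your proposal is a correct and clearly written reduction of Conjecture~\ref{conjecture} to precisely the two uniformity statements that Theorem~\ref{main2} shows cannot hold in the naive form, so it should be read as an explanation of why the conjecture is open rather than as a proof; the concluding suggestion of working on a residual subclass of smoothable homeomorphisms is reasonable but is left entirely unexecuted, and making Bochi's $C^1$-perturbations $C^0$-uniform over a neighbourhood is, as you say, the genuinely missing idea.
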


The crucial step in order to obtain the residual part in proof Bochi-Man\'e's dichotomy~\cite{B} is to use that the function that associates to any $f$ in the set of area-preserving $C^1$-diffeomorphisms, equipped with the $C^1$-topology, the number $$\hat{\Lambda}(f)=\int_S \chi^+(f,x) d\lambdaup(x),$$
where $\chi^+$ represents the top (or upper) Lyapunov exponent, is upper-semicontinuous. So, we could prove Conjecture~\ref{conjecture} once we get the same type of behavior for the function $\Lambda(h)$ defined in (\ref{entropy}). In \S 3 we will prove that such a function cannot be upper-semicontinuous, which is the content of Theorem~\ref{main2}.

\subsection{Generalizations of Theorem~\ref{main}}\label{G}

The three-dimensional version of Theorem~\ref{main} should follow from the strategy used in~\cite{BV2} and also from easier   topological arguments where the key objects are topological realizable sequences (see the definition in \S~\ref{TRS}). Observe that dominated splitting (see definition in \cite{BV2}) is not an obstruction to blending different expansion rates as it was with the $C^1$-topology.

So let $M$ be a compact, connected, boundaryless, smooth three-dimensional manifold and let $\lambdaup$ be the Lebesgue measure on $M$.

\begin{conjecture}
  \label{mainB}
  Given any $h\in \text{Hom\,}_\lambdaup(M)$ and $\epsilon>0$, there exists $g\in \text{Hom\,}_\lambdaup(M)$,  $\epsilon$-$C^0$-close to $h$, such that all its \emph{new} Lyapunov exponents are zero.
\end{conjecture}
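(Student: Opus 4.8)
The plan is to transplant the proof of Theorem~\ref{main} to dimension three, letting dominated splittings play the role that Anosov diffeomorphisms play on a surface: they are the sole obstruction, in the $C^1$-topology, to blending expansion rates and collapsing the exponents. So, first I would argue exactly as in the two-dimensional case. Given $h\in\text{Hom\,}_\lambdaup(M)$ and $\epsilon>0$, invoke the smoothing theorem for volume-preserving homeomorphisms — available precisely because $\dim M\le 3$, by Munkres' theorem (see~\cite{Mu,Oh,S}) — to produce a $C^\infty$ volume-preserving diffeomorphism $f$ with $d(f,h)<\epsilon/3$. By Theorem~\ref{main0} the new Lyapunov exponents of a $C^\infty$ diffeomorphism coincide with the usual ones, and since $f$ preserves $\lambdaup$ the pointwise sum of its Oseledets exponents vanishes $\lambdaup$-a.e.; hence it is enough to manufacture a $C^\infty$ volume-preserving $g$ with $d(g,f)<2\epsilon/3$ whose \emph{top} usual Lyapunov exponent is zero for $\lambdaup$-almost every point.

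The core of the argument is the second step: destroying dominated splittings by arbitrarily $C^0$-small perturbations. A dominated splitting is an open condition in the $C^1$-topology but \emph{not} in the $C^0$-topology, because it forces both a uniform lower bound on the angle between the invariant subbundles and a uniform domination rate, and either can be sabotaged by composing $f$ locally, along finite orbit segments, with $C^\infty$ volume-preserving ``shears'' that are $C^0$-small yet have large derivative in suitably chosen directions. The natural bookkeeping device is the notion of \emph{topological realizable sequence} from \S\ref{TRS}: one shows that along a finite orbit piece essentially any bounded-distortion volume-preserving linear cocycle can be realized, up to an arbitrarily small $C^0$-error, as the effect of such a perturbation, and then chooses the cocycle so as to rotate the would-be stable and unstable directions into one another. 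Performing this over a suitable exhaustion of $M$ by such orbit segments, with total $C^0$-cost below $\epsilon/3$, yields a $C^\infty$ volume-preserving $f_1$ with $d(f_1,f)<\epsilon/3$ that carries no dominated splitting over any positive-measure invariant set. (Note that $f_1$ may be very large in the $C^1$-metric; this is harmless, since only $C^0$-closeness is required.)

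With the $C^1$-obstruction gone, the third step applies the perturbation scheme of Bochi and Viana~\cite{BV2} — the higher-dimensional incarnation of Bochi--Ma\~n\'e~\cite{Boc,B} — to $f_1$: in the absence of dominated splitting one may, by $C^1$-small volume-preserving perturbations, drive the integrated sum of the $k$ largest Lyapunov exponents down to zero for every $k$; running the usual Baire-category / ``$\liminf$'' argument over the ergodic components, and interlacing it with the mechanism of the second step so that no dominated splitting reappears, one obtains a $C^\infty$ volume-preserving $g$ with $d(g,f_1)<\epsilon/3$ whose top Lyapunov exponent — hence, the sum of all its exponents being zero, each of them — vanishes $\lambdaup$-almost everywhere. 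By Theorem~\ref{main0} all new Lyapunov exponents of $g$ vanish a.e., and $d(g,h)<\epsilon$, which is the assertion.

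I expect the main obstacle to be the second step: not the mere existence of $C^0$-small perturbations killing a \emph{given} dominated splitting, which is flexible in the $C^0$-world, but making the destruction global and stable — covering the relevant part of phase space by controlled orbit segments, holding the total $C^0$-cost under control, and, most delicately, dovetailing the construction with the $C^1$-perturbations of the third step so that the two do not undo one another. A secondary difficulty is to extract from~\cite{BV2} the quantitative perturbation statement (rather than the residual dichotomy) that pushes the top exponent all the way to zero on a full-measure set rather than merely a positive-measure one; in dimension two this is precisely where Bochi's lemma enters, and the three-dimensional analogue should be contained in the technology of~\cite{BV2}.
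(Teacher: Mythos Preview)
The statement you are attempting to prove is labeled \emph{Conjecture}~\ref{mainB} in the paper, not a theorem; the authors do not supply a proof. What they do supply, in the paragraph immediately preceding the conjecture, is a one-sentence strategy: the three-dimensional result ``should follow from the strategy used in~\cite{BV2} and also from easier topological arguments where the key objects are topological realizable sequences'', together with the observation that dominated splitting ``is not an obstruction to blending different expansion rates as it was with the $C^1$-topology''. Your proposal is precisely an elaboration of that sentence: smooth via~\cite{Oh,S} (valid because $\dim M\le 3$), kill dominated splittings by $C^0$-small topological realizable sequences, then feed into the Bochi--Viana machinery~\cite{BV2}, and finish with Theorem~\ref{main0}. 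In that sense your outline matches the paper's intended route exactly.

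However, precisely because the paper leaves this as a conjecture, there is no proof for you to have reproduced, and the difficulties you yourself flag in your last paragraph are the substantive reasons it remains open. Two points deserve emphasis. First, the Bochi--Viana theorem in~\cite{BV2} is a \emph{dichotomy} (dominated splitting versus approximability by systems with equal exponents on a given ergodic component), stated as a residual statement; extracting from it a single perturbation that drives \emph{all} exponents to zero $\lambdaup$-a.e.\ once dominated splitting has been excluded is not automatic and is not spelled out in~\cite{BV2}. Second, and more seriously, your ``second step'' and ``third step'' interact: the $C^1$-perturbations of~\cite{BV2} may recreate a dominated splitting, and your scheme of ``interlacing'' the $C^0$ destruction with the $C^1$ collapse is exactly the part that has no written proof anywhere. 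So your proposal is a faithful expansion of the authors' heuristic, but it is a programme rather than a proof, and the paper does not claim otherwise.
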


We mention that, due to a result of M\"{u}ller~\cite{Mul}, we also have a smoothing version for homeomorphisms in dimension $n\geq 5$. Thus,  Conjecture~\ref{mainB} can be stated for manifolds $M$ such that $\dim(M)\geq 5$. On the other hand we remark that we have no idea how to prove this result when $\dim(M)= 4$.

\end{subsection}

\section{Lyapunov exponents for homeomorphisms}\label{NLE}

The aim of this section is to introduce \emph{new} Lyapunov exponents for continuous maps. In addition, we will prove that for differentiable maps the new exponents coincide with the classical ones (Theorem~\ref{main0}).

Our definition of the Lyapunov exponents is based on the definition given in~\cite{BS}, although in that paper the construction is done only in $\RR^n$ and the authors only define $n$ values associated with the Lyapunov exponent at each point. We should also remark that the order of the limits in~\eqref{eq:LyapunovExp} is the opposite of the order considered in~\cite{BS} and that allow us to show that for differentiable maps our Lyapunov exponents always coincide with the classical ones. It should also be mentioned that in~\cite{BS} the $n$ values of the Lyapunov exponent for non differentiable maps are defined with the additional purpose of establishing a lower bound for the Hausdorff dimension of some geometric constructions in $\RR^n$.
Previous attempts to use Lyapunov exponents in the non differentiable setting were done in~\cite{YK, B}. Namely, Kifer introduces in~\cite{YK} the maximal and minimal values of the Lyapunov exponent for arbitrary continuous maps in metric spaces and in~\cite{B} Barreira introduced different numbers that replace the same two values in the case of repellers of maps that are not necessarily differentiable and used them to obtain estimates for the Hausdorff dimension of the referred repellers. We stress that to the best of our knowledge the present work is the first where, given an arbitrary continuous map $f\colon M \to M$ and $x \in M$, a value for the Lyapunov exponent is defined for each $v \in T_x M$.

Like in all the other works mentioned, the approach in our definition of the new exponents will be to imitate the derivative as much as possible. Unfortunately, since our ``substitute'' for the derivative is not a
cocycle for an arbitrary continuous map, we are not able to show that the \emph{new} Lyapunov exponent is in fact a Lyapunov exponent (in the sense of the abstract theory defined in~\cite{BVGN}) and thus desirable properties like the invariance of the exponent along the orbit of a point must be derived directly from the definition. Under some integrability assumptions, an application of Kingmann ergodic theorem allow us to conclude that the top new exponent is invariant in the orbit of almost all points.

\subsection{\emph{New} Lyapunov exponents and proof of Theorem~\ref{main0}}

We still denote by $S$ a compact, connected, boundaryless smooth Riemannian surface and by $\lambdaup$ the Lebesgue measure on $S$. By compactness of $S$ and Darboux's theorem (see, e.g.~\cite{Ar78}), there exists a finite atlas $\mathcal{A}=\{\varphi_i\colon U_i\rightarrow\mathbb{R}^2;\,\, i=1,...,k\}$, where $U_i\subset S$ is an open set, and the charts are conservative. We assume  that, for any $x\in S$, we choose univocally $i(x):=\min \{i\in 1,...,k\colon x\in U_i\}$. The Riemannian metric fixed in the beginning shall not be used, instead we will consider the metric in $T_x S\ni v$ defined by $\|v\|:= \|D\varphi_{i(x)}\cdot v\|$, thus our computations in the sequel will be performed in the Euclidean space via the fixed charts.

Let $f\colon S \to S$ be a continuous map and, for each $x \in M$, $\delta > 0$ and $n \in \NN$, consider the set
$$B_x(\delta, n):=\{y \in M \colon d(f^j(x), f^j(y))<\delta \ \ {\rm for } \ \ j=0, 1, \dots, n\}$$
that we call \emph{dynamical ball}. For each $x \in S$ and $v \in T_x S$ define the \emph{new Lyapunov exponent of $f$ at $x$} by
\begin{equation} \label{eq:LyapunovExp}
\chi^+_N (f,x,v) :=  \limsup_{n \to +\infty} \lim_{\delta \to 0} \frac{1}{n} \log \sup_{y \in (B_x(\delta,n) \setminus \{x\}) \cap L_{x,v}}
\Delta(f,n,x,y)
\end{equation}
where
$$
\Delta(f,n,x,y) := \frac{\|f^n(x)-f^n(y)\|}{\|x-y\|} \,\,\,\text{ and }\,\,\, L_{x,v}:=\{x+kv \in \RR^2: k \in \mathbb{R} \}.
$$
Additionally, for each $x \in S$, we define the number
\begin{equation} \label{eq:Numero++}
\chi^{+}_N (f,x) :=  \limsup_{n \to +\infty} \lim_{\delta \to 0} \frac{1}{n} \log \sup_{y \in B_x(\delta,n) \setminus \{x\}} \Delta(f,n,x,y)
\end{equation}
and we call it the \emph{top new Lyapunov exponent}.

Clearly, for all $v \in T_x S$, we have
\[
\chi^+_N (f,x,v) \le \chi^{+}_N (f,x).
\]

\begin{remark}
In~\eqref{eq:LyapunovExp} and~\eqref{eq:Numero++}, though we do not write it explicitly, by $B_x(\delta,n) \setminus \{x\}$ we mean the set
$\varphi_{i(x)} \left( B_x(\delta,n) \setminus \{x\} \right)$ (note that we can assume that $\delta>0$ is sufficiently small and therefore assume that $B_x(\delta,n) \setminus \{x\}$
is contained in $U_{i(x)}$). Naturally, we also identify the vector $v \in T_x M$ with the corresponding $D\varphi_{i(x)}\cdot v \in \RR^2$ and that allow us to define $L_{x,v}$. We abuse the notation and keep denoting by $v$ instead of $D\varphi_{i(x)}\cdot v$ in order to avoid heavy notation.
\end{remark}

If $f$ is a differentiable map, we denote by $\chi^+$ the Lyapunov exponent defined in the standard way:
\[
\chi^+(f,x,v) := \limsup_{n \to \infty} \frac{1}{n} \log \|Df^n_x\cdot v\|
\]
and we also consider the numbers
\[
\chi^{+}(f,x) := \limsup_{n \to \infty} \frac{1}{n} \log \ \sup_{v \ne 0 } \frac{\|Df^n_x\cdot v\|}{\|v\|}.
\]

\begin{remark}
It follows from~\cite[Theorem 3]{BS} that if $f$ is a differentiable map of a compact manifold and $\mu$ is a finite $f$-invariant measure then
for $\mu$-almost every $x$ we have

$$ \chi^{+}(f,x)= \sup_{v \in T_xM} \chi^+(f,x,v).$$
\end{remark}

Theorem~\ref{main0} is included in the following result.

\begin{theorem}
If $f$ is a differentiable map we have $\chi_N^+(f,x,v)=\chi^+(f,x,v)$
 for all $x \in S$ and $v \in T_xS$ and $\chi_N^{+}(f,x)=\chi^{+}(f,x)$ for all $x \in S$.
\end{theorem}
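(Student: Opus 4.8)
The plan is to prove the two claimed identities by showing that, for a differentiable map, the ``incremental quotient'' $\Delta(f,n,x,y)$ appearing in the definition of the \emph{new} exponent converges, after the inner limit $\delta \to 0$, to the corresponding operator/directional norm of $Df^n_x$. The point is that the order of limits matters: we first send $\delta \to 0$ (which localises us to the derivative) and only afterwards take $\frac1n\log$ and the $\limsup_n$; this is exactly why the new exponent recovers the classical one.

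First I would fix $x\in S$ and $v\in T_xS$, work in the chart $\varphi_{i(x)}$ (so all computations are Euclidean, as stipulated), and treat the directional statement $\chi^+_N(f,x,v)=\chi^+(f,x,v)$. For each fixed $n$, consider the function
\[
\delta \longmapsto \sup_{y \in (B_x(\delta,n)\setminus\{x\})\cap L_{x,v}} \frac{\|f^n(x)-f^n(y)\|}{\|x-y\|}.
\]
As $\delta \to 0$, the set $B_x(\delta,n)$ shrinks to $\{x\}$, so the supremum is taken over points $y = x + kv$ with $k \to 0$; differentiability of $f^n$ at $x$ gives
\[
\frac{\|f^n(x+kv) - f^n(x)\|}{\|kv\|} \longrightarrow \frac{\|Df^n_x\cdot v\|}{\|v\|}
\]
as $k\to 0$. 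One has to check that the $\sup$ over the shrinking one-dimensional segment has the same limit as the pointwise limit along $k\to 0$; this is where a little care is needed, but it follows because the segment $L_{x,v}\cap B_x(\delta,n)$ is (for small $\delta$) an interval around $x$ whose length tends to $0$, together with continuity of the difference quotient in $k$ away from $0$ and its convergence as $k\to 0$. Hence $\lim_{\delta\to0}$ of the $\sup$ equals $\|Df^n_x\cdot v\|/\|v\|$. Plugging this into~\eqref{eq:LyapunovExp} yields
\[
\chi^+_N(f,x,v) = \limsup_{n\to\infty}\frac1n \log \frac{\|Df^n_x\cdot v\|}{\|v\|} = \limsup_{n\to\infty}\frac1n \log \|Df^n_x\cdot v\| = \chi^+(f,x,v),
\]
the middle equality because $\frac1n\log\|v\|\to 0$.

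For the top exponent $\chi^+_N(f,x)=\chi^+(f,x)$ the argument is the same but with the supremum taken over all of $B_x(\delta,n)\setminus\{x\}$ rather than over the line $L_{x,v}$. Here, as $\delta\to0$, the difference quotient $\|f^n(x)-f^n(y)\|/\|x-y\|$ over $y$ in a full punctured neighbourhood of $x$ converges uniformly to the operator norm $\sup_{w\ne 0}\|Df^n_x\cdot w\|/\|w\|$: indeed by differentiability, writing $y = x+w$, we have $f^n(y)-f^n(x) = Df^n_x\cdot w + o(\|w\|)$, so the quotient is $\|Df^n_x\cdot w + o(\|w\|)\|/\|w\|$, which is within $o(1)$ of $\|Df^n_x\cdot (w/\|w\|)\|$ uniformly in the direction as $\|w\|\to 0$; taking the sup over $w$ and then $\delta\to 0$ gives the operator norm of $Df^n_x$. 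Then again apply $\frac1n\log$ and $\limsup_n$.

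The main obstacle I anticipate is the interchange of $\sup$ and $\lim_{\delta\to 0}$: one must argue that $\lim_{\delta\to0}\sup_{y\in B_x(\delta,n)\setminus\{x\}}\Delta$ is genuinely the operator norm and not something larger coming from directions where the $o(\|w\|)$ error is not yet negligible at a given scale $\delta$. This is handled by the uniform (in direction) first-order Taylor estimate at the single point $x$ for the fixed map $f^n$, valid since $f^n$ is differentiable at $x$; compactness of the unit sphere of directions makes the error term uniform. A minor additional point is to ensure $\delta$ is small enough that $B_x(\delta,n)\setminus\{x\}\subset U_{i(x)}$ so the chart expressions make sense, which is already noted in the remark following~\eqref{eq:Numero++}. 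No invariant measure, no ergodic theorem, and no global hypothesis beyond pointwise differentiability is needed — the identity is completely local and holds for every $x$ and every $v$.
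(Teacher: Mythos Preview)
Your proposal is correct and follows essentially the same route as the paper: for each fixed $n$ you use the first-order Taylor expansion $f^n(x+w)-f^n(x)=Df^n_x\cdot w+o(\|w\|)$ to identify the inner limit $\delta\to 0$ of the incremental supremum with the directional (resp.\ operator) norm of $Df^n_x$, and then take $\frac1n\log$ and $\limsup_n$. The only cosmetic difference is that the paper separates the argument into an explicit upper bound (triangle inequality plus an error term $\phi_{n,x}(\delta)\to 0$) and a lower bound (evaluating the sup at $y=x+\varepsilon v$ and letting $\varepsilon\to 0$), whereas you phrase it as a single convergence statement; when written out, your argument would unfold into the same two inequalities. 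One small remark: the uniformity of the $o(\|w\|)$ term in the direction $w/\|w\|$ is already built into the definition of (Fr\'echet) differentiability and does not require a separate compactness argument on the unit sphere.
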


\begin{proof} We will prove that, if $f$ is a differentiable map then, for all $x \in S$ and $v \in T_x S$, we have
$$\chi^+_N(f,x,v)=\chi^+(f,x,v).$$

Let $x \in S$ and $v \in T_x S$. Since $f$ is differentiable, for all
$n \in \NN$ there is a function $r_n$ such that we have
\[
\frac{\|f^n(x)-f^n(y)\|}{\|x-y\|}=\frac{\|Df^n(y-x)+r_n(y-x)\|}{\|y-x\|}
\]
and $r_n(\varepsilon) \to 0$ as $\varepsilon \to 0$. Therefore
\[
\begin{split}
& \sup_{y \in B_x(\delta,n) \setminus \{x\}\cap L_{x,v}} \Delta(f,n,x,y) \\
\le & \sup_{y \in B_x(\delta,n) \setminus \{x\}\cap L_{x,v}} \frac{\|Df^n(y-x)\|}{\|y-x\|}+
\sup_{y \in B_x(\delta,n) \setminus \{x\}\cap L_{x,v}} \frac{\|r_n(y-x)\|}{\|y-x\|}\\
\le & \|Df^n_x |_{L_{x,v}}\|+\phi_{n,x}(\delta),
\end{split}
\]
where
\[
\phi_{n,x}(\delta):=\sup_{y \in B_x(\delta,n) \setminus \{x\} \cap L_{x,v}} \frac{\|r_n(y-x)\|}{\|y-x\|}.
\]
Since for each fixed $n \in \NN$, $ \frac{\|r_n(y-x)\|}{\|y-x\|} \to 0$ and the map $\delta \mapsto \phi_{n,x}(\delta)$ is decreasing we have
$\displaystyle \lim_{\delta \to 0} \phi_{n,x}(\delta)=0$ and thus
\[
\begin{split}
& \lim_{\delta \to 0} \frac{1}{n} \log \sup_{y \in B_x(\delta,n)\setminus \{x\}\cap L_{x,v}} \Delta(f,n,x,y) \\
\le & \frac{1}{n} \log  \left[ \lim_{\delta \to 0}  \sup_{y \in B_x(\delta,n)\setminus \{x\} \cap L_{x,v}} \frac{\|Df^n(y-x)\|}{\|y-x\|}
+ \lim_{\delta \to 0} \phi_{n,x}(\delta) \right]\\
= &  \frac{1}{n} \log \|Df^n_x |_{L_{x,v}}\|.
\end{split}
\]
Letting $n \to +\infty$ we obtain $\chi_N^+(f,x,v) \le \chi^+(f,x,v)$.

On the other hand, by the differentiability of $f$, given $v \in T_x S$ with $\|v\|=1$ we have
\[
\|Df_x^n \cdot v\| = \lim_{\varepsilon \to 0} \frac{\|f^n(x)-f^n(x+\varepsilon v)\|}{|\varepsilon|}.
\]
For sufficiently small $\varepsilon>0$ such that $x+\varepsilon v \in B_x(\delta,n)$, we have
\[
\begin{split}
\sup_{y \in B_x(\delta,n) \cap L_{x,v}} \Delta(f,n,x,y)
& =\sup_{y \in B_x(\delta,n) \cap L_{x,v}} \frac{\|f^n(x)-f^n(y)\|}{\|x-y\|}\\
& \ge \frac{\|f^n(x)-f^n(x+\varepsilon v)\|}{\|x-(x+\varepsilon v)\|} \\
& = \frac{\|f^n(x)-f^n(x+\varepsilon v)\|}{|\varepsilon|}.
\end{split}
\]
Thus, letting $\varepsilon \to 0$ we obtain
\[
\sup_{y \in B_x(\delta,n) \cap L_{x,v}}  \Delta(f,n,x,y) \ge
\|Df^n_x |_{L_{x,v}}\|.
\]
Therefore
\[
\begin{split}
\chi^+_N(f,x,v)
& = \limsup_{n \to +\infty} \lim_{\delta \to 0} \frac{1}{n}
\log \sup_{y \in B_x(\delta,n) \cap L_{x,v}}  \Delta(f,n,x,y) \\
& \ge \chi^+ (f,x,v).
\end{split}
\]
We conclude that $\chi^+(f,x,v)=\chi^+_N(f,x,v)$.

The same reasoning allows us to prove that $\chi^{+}_N(f,x)=\chi^{+}(f,x)$ for all $x \in S$.
\end{proof}

\subsection{On the invariance of the exponents along orbits}

The next result shows that, for homeomorphisms, assuming the integrability of some functions, the top new Lyapunov exponents $\chi_N^{+}(f,x)$ are invariant for the orbit of the map.

\begin{theorem}\label{teo:invariancia}
Let $f \in \text{Hom\,}_\lambdaup(S)$. If for all $\delta >0$ sufficiently small and all
$n \in \mathbb{N}$ the function $g_n^\delta: S \to \bar{\mathbb{R}}$ given by
\begin{equation}\label{eq:integravel}
    g_n^\delta(x) = \sup_{y \in B_x(\delta,n)\setminus \{x\}} \log \Delta(f,n,x,y)
\end{equation}
is integrable then, for $\lambdaup$-a.e. $x \in S$, we have
\[
\chi^{+}_N (f,x) =  \lim_{n \to +\infty} \lim_{\delta \to 0} \frac{1}{n} \log \sup_{y \in B_x(\delta,n)\setminus \{x\}}
\Delta(f,n,x,y)
\]
and $\chi^{+}_N(f,x)=\chi^{+}_N(f,f^m (x))$ for all $m \in \ZZ$.
\end{theorem}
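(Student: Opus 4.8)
The plan is to recognise, for each fixed small $\delta>0$, the family $(g_n^\delta)_n$ as a subadditive cocycle over $f$, then let $\delta\to0$ and invoke Kingman's subadditive ergodic theorem. The starting point is the ``chain rule'' for the quotients $\Delta$: for $x\neq y$ and $n,m\in\NN$ one has
\[
\Delta(f,n+m,x,y)=\Delta\big(f,m,f^n(x),f^n(y)\big)\cdot\Delta(f,n,x,y),
\]
which is immediate from the definition once one checks that the metrics induced by the charts $\varphi_{i(\cdot)}$ cancel correctly, since the intermediate point $f^n(x)$ is always measured in the same chart $\varphi_{i(f^n(x))}$, so the factor $\|f^n(x)-f^n(y)\|$ in the denominator of the first factor and in the numerator of the second cancel exactly. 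Taking logarithms, and using that $y\in B_x(\delta,n+m)\setminus\{x\}$ forces both $y\in B_x(\delta,n)\setminus\{x\}$ and, since $f^n$ is injective, $f^n(y)\in B_{f^n(x)}(\delta,m)\setminus\{f^n(x)\}$, one obtains after a supremum over such $y$ the subadditivity relation
\[
g_{n+m}^\delta(x)\le g_n^\delta(x)+g_m^\delta(f^n(x)),
\]
and by hypothesis $g_1^\delta\in L^1$.

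Next I would pass to the limit $\delta\to0$. Since $B_x(\delta,n)$ increases with $\delta$, the map $\delta\mapsto g_n^\delta(x)$ is non-decreasing, so $G_n(x):=\lim_{\delta\to0}g_n^\delta(x)=\inf_\delta g_n^\delta(x)$ exists in $[-\infty,+\infty]$; moreover $G_n\le g_n^\delta$, hence $G_n^+\in L^1$ and $G_n$ is finite $\lambdaup$-a.e.\ (it is measurable, being a $\liminf$ of the measurable functions $g_n^{1/k}$). Because the limit of a sum of two non-decreasing functions is the sum of the limits, letting $\delta\to0$ in the displayed inequality gives, for $\lambdaup$-a.e.\ $x$ and all $n,m$,
\[
G_{n+m}(x)\le G_n(x)+G_m(f^n(x)),
\]
so $(G_n)_n$ is itself a subadditive cocycle over $f$ with $G_1^+\in L^1$.

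Now Kingman's subadditive ergodic theorem applies to $(G_n)_n$ relative to the $\lambdaup$-invariant map $f$: there is an $f$-invariant function $\widetilde G\colon S\to[-\infty,+\infty)$ with $\widetilde G^+\in L^1$ such that $\tfrac1n G_n(x)\to\widetilde G(x)$ for $\lambdaup$-a.e.\ $x$. Since $\tfrac1n G_n(x)=\lim_{\delta\to0}\tfrac1n\log\sup_{y\in B_x(\delta,n)\setminus\{x\}}\Delta(f,n,x,y)$, this a.e.\ convergence says precisely that the $\limsup$ in~\eqref{eq:Numero++} is in fact a limit equal to $\widetilde G(x)$, which is the first assertion. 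For the invariance along orbits, $\widetilde G\circ f=\widetilde G$ $\lambdaup$-a.e., hence $\chi^{+}_N(f,x)=\chi^{+}_N(f,f(x))$ on a full-measure set; intersecting the relevant full-measure sets over all iterates and using that $f$ is a measure-preserving bijection yields an $f$-invariant set of full measure on which $\chi^{+}_N(f,x)=\chi^{+}_N(f,f^m(x))$ for every $m\in\ZZ$.

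The main obstacle is the subadditivity step: one must keep careful track of the various charts $\varphi_{i(x)},\varphi_{i(f^n(x))},\ldots$ entering the definition of $\Delta$, handle the possibility that $g_n^\delta$ (hence $G_n$) equals $-\infty$ or, a priori, $+\infty$ (the latter excluded $\lambdaup$-a.e.\ by the integrability hypothesis), and verify that the inclusion $f^n\big(B_x(\delta,n+m)\big)\subseteq B_{f^n(x)}(\delta,m)$ is exactly what lets the inner supremum split across the cocycle identity. Once subadditivity is established, interchanging $\lim_{\delta\to0}$ with the finite sum is routine monotonicity, and the two conclusions follow directly from Kingman's theorem.
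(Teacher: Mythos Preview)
Your proof is correct and follows essentially the same route as the paper: establish the chain rule $\Delta(f,n+m,x,y)=\Delta(f,m,f^n(x),f^n(y))\cdot\Delta(f,n,x,y)$, use the inclusion $f^n(B_x(\delta,n+m))\subseteq B_{f^n(x)}(\delta,m)$ to deduce the subadditivity $g_{n+m}^\delta(x)\le g_n^\delta(x)+g_m^\delta(f^n(x))$, and then invoke Kingman's theorem together with the integrability hypothesis. The only difference is the order of the last two operations: the paper applies Kingman for each fixed $\delta$ and then lets $\delta\to0$, whereas you first let $\delta\to0$ (using the monotonicity in $\delta$) to get the subadditive sequence $(G_n)_n$ and apply Kingman once to that---a slightly cleaner way to avoid having to justify the interchange of the limits in $n$ and $\delta$ afterwards.
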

\begin{proof}
Let $x \in S$, $\delta>0$ and $n \in \mathbb{N}$. We have, using the fact that $f \in \text{Hom\,}_\lambdaup(S)$,
\begin{eqnarray*}
f\left(B_x(\delta,n+1)\right)&=&f\left(\cap_{k=0}^{n+1} f^{-k}(B(f^k(x),\delta))\right)=\cap_{k=0}^{n+1} f^{-k+1}(B(f^k (x),\delta)) \\
&\subseteq& \cap_{k=1}^{n+1} f^{-k+1}(B(f^k (x),\delta)) =  \cap_{j=0}^{n} f^{-j}(B(f^{j+1} (x),\delta)) \\
&=&\cap_{j=0}^{n} f^{-j}(B(f^{j} (f(x)),\delta)) = \displaystyle B_{f(x)}(\delta,n).
\end{eqnarray*}

Thus we have, for all $m,n \in \NN$,
\[
\begin{split}
g^\delta_{n+m}(x)
& = \sup_{y \in B_x(\delta,m+n)} \log \Delta(f,n,x,y) \\
& = \sup_{y \in B_x(\delta,m+n)} \log
\left[ \frac{\|f^{n+m}(x) - f^{n+m}(y)\|}{\|f^m (x) - f^m (y)\|} \,
\frac{\|f^m (x) - f^m (y)\|}{\|x - y\|} \right] \\
& \le \sup_{y \in B_x(\delta,m+n)} \log
\frac{\|f^{n+m}(x) - f^{n+m}(y)\|}{\|f^m (x) - f^m (y)\|} +
\sup_{y \in B_x(\delta,m+n)}
\log \frac{\|f^m (x) - f^m (y)\|}{\|x - y\|} \\
& \le \sup_{u \in B_{f^m (x)}(\delta,n)} \log
\frac{\|f^n(f^m (x)) - f^n (u)\|}{\|f^m (x) - u\|} +
\sup_{y \in B_x(\delta,m)}
\log \frac{\|f^m (x) - f^m (y)\|}{\|x - y\|} \\
& = \sup_{u \in B_{f^m (x)}(\delta,n)} \log
\Delta(f,n,f^m (x),u) +
\sup_{y \in B_x(\delta,m)}
\log \Delta(f,m,x,y) \\
& = g_n^\delta (f^m (x)) + g^\delta_m (x).
\end{split}
\]
and we conclude that $n \mapsto g^\delta_n(x)$ is subadditive.
By Kingman's subadditive ergodic theorem, using the
integrability of~\eqref{eq:integravel}, we obtain, for $\lambdaup$-a.e.
$x \in S$, the existence of the limit
\[
\lim_{n \to \infty} \frac{1}{n} \sup_{y \in B_x(\delta,n)}
\log \Delta(f,n,x,y)
\]
and the $f$-invariance of the function $x \mapsto g^\delta_n(x)$.
Therefore we have, since $f$ is a homeomorphism,
$g^\delta_n(f^m (x)) = g^\delta_n(x)$ for all $m \in \ZZ$.

Letting $\delta \to 0$ we have
\[
\lim_{\delta \to 0} \frac{1}{n} \log g^\delta_n(f^m (x))
= \lim_{\delta \to 0} \frac{1}{n}  \log g^\delta_n(x),
\]
and letting $n \to +\infty$, we finally obtain
$\chi^+(f,f^m (x))=\chi^+(f,x)$ for all $m \in \ZZ$.
\end{proof}

\subsection{New Lyapunov exponents and the dimension of the manifold}

We remark that we loose some features of the Lyapunov exponent when we don't have differentiability. In particular, since we no longer have the linearity given by the derivative, the number of Lyapunov exponents is no longer bounded by the dimension of the manifold. We illustrate this fact with the next example.

\begin{example}
Let $f:\RR^2 \to \RR^2$ be given by
$$f(x,y)
=\begin{cases}
(2x,\, \frac32 x + \frac12 y), & \text{if} \quad x(y-x) > 0 \\
(3x-y,\, 2y), & \text{if} \quad x(y-x) < 0 \\
(3x,\, \frac12 y), & \text{if} \quad xy \le 0 \\
(2x,\, 2y), & \text{if} \quad x=y
\end{cases}.$$
We have $f^n (0,0)=(0,0)$ for all $n \in \NN$. On the other hand, if $h \ne 0$, $f^n(h,h)=(2^n h,2^n h)$, $f^n(0,h)=(0,h / 2^n)$ and $f^n(h,0)=(3^n h, 0)$.

We thus have, if $h \ne 0$,
$$\Delta(f,n,(0,0),(h,h)) = \frac{\|(2^n h,2^n h)\|}{\|(h,h)\|} = 2^n,$$ $$\Delta(f,n,(0,0),(0,h)) = \frac{\|(0,h / 2^n)\|}{\|(0,h)\|} = \frac{1}{2^n}$$ and
$$\Delta(f,n,(0,0),(h,0)) = \frac{\|(3^n h, 0)\|}{\|(h,0)\|} = 3^n.$$

It follows that, if $h \ne 0$, $\chi^+_N (f,(0,0),(h,h)) = \log 2$, $\chi^+_N (f,(0,0),(0,h)) = - \log 2$ and
$\chi^+_N (f,(0,0),(h,0)) = \log 3$.

We emphasize that $f$ is not differentiable in $(0,0)$. Observe also that $f$ is dissipative.
\end{example}

Note that the definition of Lyapunov exponent in~\eqref{eq:LyapunovExp} can be ge\-ne\-ra\-lized in a straightforward manner to arbitrary continuous maps defined in $n$-dimensional manifolds and it is immediate that a co\-rres\-pon\-ding version Theorem~\ref{main0} still holds in that case. It is also easy to recognize that a direct version of Theorem~\ref{teo:invariancia} holds for homeomorphisms in arbitrary dimension. For continuous invertible maps, we can also define a corresponding notion of backward Lyapunov exponent.

\subsection{Oseledets' theorem and topological realizable sequences}\label{TRS}

 The next result, due to Oseledets~\cite{O}, is a cornerstone in smooth ergodic theory. For a proof in the two-dimensional context see ~\cite{MP}.

\begin{theorem}\label{Oseledec}

Let $f\in{\text{Diff\,}_{\lambdaup}(S)}$. For $\lambdaup$-a.e. $x\in{S}$
there exists the upper Lyapunov exponent $\chi^{+}(f,x)$
defined by the limit
$\underset{n\rightarrow{+\infty}}{\lim}\frac{1}{n}\log\|Df_x^{n}\|$
which is a non-negative measurable function of $x$. For $\lambdaup$-a.e.
point $x$ with a positive Lyapunov exponent there is a splitting of the tangent space  $T_{x}S=E_{x}^{u}\oplus{E_{x}^{s}}$ (where $\dim(E_x^s)=\dim(E_x^u)=1$) which varies measurably
with $x$ and satisfy the following equalities:
\begin{itemize}
\item If ${v}\in{E_{x}^{u}}\setminus\{\vec{0}\}$, then
$\underset{n\rightarrow{\pm\infty}}{\lim}\frac{1}{n}\log\|Df_{x}^{n}\cdot v\|=\chi^{+}(f,x)$.
\item If ${v}\in{E_{x}^{s}}\setminus\{\vec{0}\}$, then
$\underset{n\rightarrow{\pm\infty}}{\lim}\frac{1}{n}\log\|Df_x^{n}\cdot v\|=-\chi^{+}(f,x)$.
\item If
$\vec{0}\not={v}\notin{E_{x}^{u}},E_{x}^{s}$,
then\\
$\underset{n\rightarrow{+\infty}}{\lim}\frac{1}{n}\log\|Df_x^{n}\cdot v\|=\chi^{+}(f,x)$ and
$\underset{n\rightarrow{-\infty}}{\lim}\frac{1}{n}\log\|Df_{x}^{n}\cdot v\|=-\chi^{+}(f,x)$.
\end{itemize}
\end{theorem}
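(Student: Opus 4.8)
The plan is to view the statement as the two-dimensional Oseledets theorem for the derivative cocycle $A(x):=Df_x$ over the measure-preserving system $(S,\lambdaup,f)$, and to deduce everything from Kingman's subadditive ergodic theorem, using that in the conservative charts $\varphi_i$ fixed above $|\det Df^n_x|=1$ for $\lambdaup$-a.e.\ $x$, so that the two singular values of $Df^n_x$ are exactly $\|Df^n_x\|$ and $\|Df^n_x\|^{-1}$. I would argue on each ergodic component of $\lambdaup$. First, since $a_n(x):=\log\|Df^n_x\|$ is subadditive ($a_{n+m}(x)\le a_n(f^m x)+a_m(x)$ by the chain rule and submultiplicativity of the operator norm) and $a_1$ is continuous on the compact surface $S$ hence in $L^1(\lambdaup)$, Kingman's theorem gives for $\lambdaup$-a.e.\ $x$ the limit $\chi^{+}(f,x)=\lim_{n}\tfrac1n\log\|Df^n_x\|$, which is measurable and $f$-invariant; moreover $\|Df^n_x\|\ge|\det Df^n_x|^{1/2}=1$ forces $\chi^+(f,x)\ge0$. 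On the $f$-invariant set $\{\chi^+=0\}$ nothing further is required, so I would restrict attention to $\Gamma:=\{\chi^+>0\}$.

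Next, for $x\in\Gamma$ I would let $e^+_n(x),e^-_n(x)\in\PP(T_xS)$ be the most expanded and most contracted singular directions of $Df^n_x$ in the source space $T_xS$, and show that both sequences of lines are Cauchy. The quantitative input is an elementary singular-value computation: because the one-step derivatives $Df_{f^n x}$ have norm and conorm bounded uniformly on $S$, the line $e^\pm_{n+1}(x)$ differs from $e^\pm_n(x)$ by an angle $O(\|Df^n_x\|^{-2})$. Since $\tfrac1n\log\|Df^n_x\|\to\chi^+(f,x)>0$ is a genuine limit, for a.e.\ $x$ one has $\|Df^n_x\|^{-2}\le e^{-n\chi^+(f,x)}$ for all large $n$, so the angle defects are summable and $e^\pm_n(x)$ converge, say to $E^u_x$ and $E^s_x$. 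These are $Df$-invariant by construction, $Df_x E^{u/s}_x=E^{u/s}_{fx}$, and $x\mapsto E^{u/s}_x$ is measurable, being a pointwise limit of measurable sections; and $E^u_x\ne E^s_x$ for a.e.\ $x\in\Gamma$, since a common line would, by the next step, simultaneously carry the forward exponents $+\chi^+(f,x)$ and $-\chi^+(f,x)$. Hence $T_xS=E^u_x\oplus E^s_x$.

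Then, decomposing a unit vector $v$ in the orthonormal frame $\{e^+_n(x),e^-_n(x)\}$ and combining $\|Df^n_x e^\pm_n(x)\|=\|Df^n_x\|^{\pm1}$ with the exponential rate of convergence $e^\pm_n(x)\to E^{u/s}_x$, I would obtain $\|Df^n_x v\|=e^{\pm n\chi^+(f,x)+o(n)}$ for $v\in E^{u/s}_x\setminus\{\vec{0}\}$, the matching lower bound on the contracted side being supplied by $|\det Df^n_x|=1$ and the invariance of the complementary line. For $\vec{0}\ne v\notin E^u_x\cup E^s_x$ I would write $v=v^u+v^s$ with $v^u,v^s\ne\vec{0}$; the $E^u$-component dominates as $n\to+\infty$, yielding $\lim_{n\to+\infty}\tfrac1n\log\|Df^n_x v\|=\chi^+(f,x)$. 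The $n\to-\infty$ assertions would follow by applying the whole argument to $f^{-1}$, whose derivative cocycle interchanges $E^u$ and $E^s$ and has the same top exponent (because $\|Df^{-m}_x\|=\|Df^m_{f^{-m}x}\|$ by $|\det|=1$, together with the $f$-invariance of $\chi^+$); tracking the sign of $n$ reproduces the three displayed limits, and the non-negativity and measurability of $\chi^+$ are already in the first step.

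The main obstacle is the step, used in constructing the splitting, that upgrades the asymptotic identity $\tfrac1n\log\|Df^n_x\|\to\chi^+(f,x)$ to a genuinely pointwise lower bound $\|Df^n_x\|\ge e^{n\chi^+(f,x)/2}$ valid for all large $n$ on a full-measure set, with every constant depending measurably on $x$; since $\chi^+$ need not be bounded away from $0$ this needs an Egorov-type exhaustion over the level sets of $\chi^+$, after which the summability of the angle defects, and then the estimates of the third paragraph, become routine. The alternative, of course, is to invoke the general multiplicative ergodic theorem directly, as in~\cite{MP}.
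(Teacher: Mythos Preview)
The paper does not actually prove this statement: it is quoted as Oseledets' theorem, attributed to~\cite{O}, with the remark ``For a proof in the two-dimensional context see~\cite{MP}.'' So there is nothing in the paper to compare your argument against beyond that citation.

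That said, your sketch is a faithful outline of the standard two-dimensional proof one finds in~\cite{MP}: Kingman for the existence and nonnegativity of $\chi^+$, the $SL(2,\RR)$ identity $\|A^{-1}\|=\|A\|$ coming from $|\det Df^n_x|=1$ in the Darboux charts, the Cauchy-sequence argument for the singular directions with angle defect $O(\|Df^n_x\|^{-2})$, and the treatment of $n\to-\infty$ via $f^{-1}$. Two small comments. First, the ``main obstacle'' you flag is not really one: since Kingman gives a genuine pointwise limit, for each fixed $x\in\Gamma$ there is $N(x)$ with $\|Df^n_x\|\ge e^{n\chi^+(f,x)/2}$ for all $n\ge N(x)$, and that already makes $\sum_n\|Df^n_x\|^{-2}$ convergent at that $x$; no Egorov exhaustion is needed for the pointwise construction of $E^{u/s}_x$ or for measurability, which follows because $E^{u/s}_x$ is a pointwise limit of measurable sections. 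Second, when you pass to $f^{-1}$ you should say one word about why the splitting it produces coincides (with the roles swapped) with the one built from $f$; this is immediate once you know the forward growth rates along $E^u_x$ and $E^s_x$, but it is the step that ties the $n\to+\infty$ and $n\to-\infty$ statements to the \emph{same} pair of lines.
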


In order to prove his theorem Bochi used the lack of hyperbolic behavior to blend different expansion rates in the Oseledets splitting which caused a decay on the upper Lyapunov exponent. Actually, the next definition, adapted to the one given by Bochi (\cite[page 1672]{Boc}), is fundamental if one wants to give a direct prove of Theorem~\ref{main} or to generalize it to higher dimensions (cf. \S ~\ref{G}).

\begin{definition}\label{trs}
Let be given $f\in \text{Diff\,}_\omega(S)$, $\epsilon>0$, $0<\kappa<1$ and a non-periodic point $p$. We say that the sequence of
   $L_i\in SL(2,\mathbb{R})$ for $i=0,1,...,n-1$ such that
   $$T_p S\overset{L_{0}}{\longrightarrow}{T_{f(p)}S}\overset{L_{1}}
      {\longrightarrow}{T_{f^2(p)}S}\overset{L_{2}}
      {\longrightarrow}...\overset{L_{n-2}}{\longrightarrow}
      {T_{f^{n-1}(p)}S}\overset{L_{n-1}}{\longrightarrow}{T_{f^{n}(p)}S}$$
is an \textbf{$(\epsilon,\kappa)$-topological realizable sequence of length $n$ at $p$} if  for all $\gamma>0$, there exists ${r}>0$ such that, for any open set $\emptyset\not=U\subseteq{B(p,r)}$, there exists:
\begin{enumerate}
  \item [(a)] A measurable set $K\subseteq{U}$ such that $\omega(K)>(1-\kappa)\omega(U)$,
  \item [(b)] An area-preserving diffeomorphism $g$, $\epsilon$-$C^{0}$-close to $f$, such that:
\end{enumerate}
\begin{enumerate}
   \item [(i)] $f=g$ outside $\cup_{i=0}^{n-1}f^{i}(\overline{U})$ and the iterates $f^{j}(\overline{B(p,r)})$ are two-by-two disjoint for any $j$;
   \item [(ii)] If $q\in{K}$, then $\|Dg_{g^{j}(q)}-L_j\|<\gamma$ for $j\in\{0,1,...,n-1\}$.
\end{enumerate}
\end{definition}

\begin{remark}
Note that our definition of $(\epsilon,\kappa)$-topological realizable sequence of length $n$ at $p$ is with respect to the $C^0$-topology while Bochi's definition is with respect to the $C^1$-topology.
\end{remark}

\begin{remark}
Observe that, for the topological case, it is easy to blend one-dimensional fibers of Oseledets' splitting with small perturbations, whereas for the $C^1$-case the hyperbolicity (Anosov) is a strong obstruction.
\end{remark}

Instead of using a direct proof of Theorem~\ref{main} we will use the statement of Bochi-Ma\~n\'e dichotomy (\cite{Boc}). This is the content of the next section.

\section{Proof of Theorem~\ref{main}} \label{PL}

The next result, due to Arbieto and Matheus (\cite[Theorem 3.6]{AM}) allows us to change an area-preserving diffeomorphism by its derivative in a small neighborhood of a periodic point and then extend the construction (conservatively and in a $C^1$-smooth fashion) to the whole $S$. It is called \emph{weak} because we need to start with an area-preserving diffeomorphism with some regularity (of class $C^2$ at least).

\begin{theorem}(weak Pasting Lemma~\cite[Theorem 3.6]{AM})\label{WPL}
If $f\in\text{Diff\,}_{\lambdaup}(S)$ is of class $C^2$ and $x\in S$, then for any $\alpha\in]0,1[$ and $\epsilon>0$, there exists an $\epsilon$-$C^1$-perturbation $g\in\text{Diff\,}_{\lambdaup}(S)$ (which
is a $C^{1+\alpha}$ diffeomorphism) of $f$ such that, for some small neighborhoods $U\supset V$ of $x$ we get
\begin{enumerate}
\item $g|_{U^c} = f$, where $U^c$ denotes the complementary set of $U$ and
\item in local charts we have $g|_V = Df_x$.
\end{enumerate}
\end{theorem}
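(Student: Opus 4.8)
The plan is to localise the statement, perform a first crude (non area-preserving) perturbation that already linearises $f$ near $x$, and then restore area-preservation by a Moser-type correction; the subtle point is to keep this correction $C^1$-small while only assuming $f\in C^2$, and this is precisely what forces the output to be $C^{1+\alpha}$ rather than $C^2$.

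\emph{Localisation.} Working in conservative charts near $x$ and $f(x)$ I would reduce to the model case $S=\RR^2$, $x=0$, $f(0)=0$, and set $A:=Df_0\in SL(2,\RR)$ and $F:=A^{-1}\circ f$, so that $F$ is a $C^2$ area-preserving diffeomorphism near $0$ with $F(0)=0$, $DF_0=\operatorname{id}$. It then suffices to build, for every small $\rho>0$, an area-preserving $C^{1+\alpha}$ diffeomorphism $\widetilde F$ agreeing with $F$ off the ball $B_\rho$, equal to $\operatorname{id}$ on a smaller ball $B_{\rho'}$, with $\|\widetilde F-F\|_{C^1}\to 0$ as $\rho\to 0$: the map $g$ equal to $A\circ\widetilde F$ in the chart and to $f$ elsewhere is then area-preserving and of class $C^{1+\alpha}$, equals $f$ outside $U$ (the chart-preimage of $B_\rho$) and equals $Df_x$ on $V$ (the chart-preimage of $B_{\rho'}$), and $\|g-f\|_{C^1}=\|A(\widetilde F-F)\|_{C^1}<\epsilon$ for $\rho$ small enough.

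\emph{Crude step and conservative correction.} Write $R:=F-\operatorname{id}$; then $R\in C^2$, $R(0)=0$, $DR_0=0$, so on $B_\rho$ one has $\|R\|_{C^0}=O(\rho^2)$, $\|DR\|_{C^0}=O(\rho)$, $\|D^2R\|_{C^0}=O(1)$. With a bump $\theta_\rho(z)=\chi(z/\rho)$ ($\chi\equiv 0$ on $B_{1/2}$, $\chi\equiv 1$ off $B_1$) set $\widehat F_\rho:=\operatorname{id}+\theta_\rho R$; this is $C^2$, equals $\operatorname{id}$ on $B_{\rho/2}$ and $F$ off $B_\rho$, and the bounds above give $\|\widehat F_\rho-F\|_{C^1}=O(\rho)$, so for $\rho$ small it is a diffeomorphism of the ball region matching $F$ on $\partial B_\rho$. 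It is not area-preserving, but $J_\rho:=\det D\widehat F_\rho$ is a $C^1$ function, identically $1$ off $B_\rho$ and on $B_{\rho/2}$, with $\|J_\rho-1\|_{C^0}=O(\rho)$ and only $\operatorname{Lip}(J_\rho-1)=O(1)$ on $B_\rho$; interpolating these two bounds on balls of radius $\le\rho$ and $>\rho$ gives, for each fixed $\alpha\in\,]0,1[$,
\[
\|J_\rho-1\|_{C^{0,\alpha}}=O(\rho^{1-\alpha})\longrightarrow 0\quad(\rho\to 0).
\]
Moreover the masses balance, $\int_{B_\rho}J_\rho=\operatorname{area}(\widehat F_\rho(B_\rho))=\operatorname{area}(F(B_\rho))=\operatorname{area}(B_\rho)$, because $\widehat F_\rho=F$ on $\partial B_\rho$ and $F$ preserves area. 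Rescaling $B_\rho$ to the unit ball (so the constants stay uniform) and applying the Dacorogna--Moser theorem — solving $\operatorname{div}v=1-J_\rho$ with zero boundary values in H\"older spaces and integrating the associated time-dependent vector field — produces a $C^{1,\alpha}$ diffeomorphism $\Psi_\rho$, equal to the identity outside $B_\rho$ and on some $B_{\rho'}\subset B_{\rho/2}$, with $\Psi_\rho^*(J_\rho\,\omega)=\omega$ and $\|\Psi_\rho-\operatorname{id}\|_{C^1}\lesssim\|J_\rho-1\|_{C^{0,\alpha}}\to 0$ (here $\omega$ is the standard area form). Then $\widetilde F:=\widehat F_\rho\circ\Psi_\rho$ is $C^{1,\alpha}$, area-preserving since $\widetilde F^*\omega=\Psi_\rho^*\widehat F_\rho^*\omega=\Psi_\rho^*(J_\rho\omega)=\omega$, equals $F$ off $B_\rho$ and $\operatorname{id}$ on $B_{\rho'}$, and $\|\widetilde F-F\|_{C^1}\le\|\widehat F_\rho-F\|_{C^1}+O(\|\Psi_\rho-\operatorname{id}\|_{C^1})\to 0$, which is exactly what the localisation step needs. (As an alternative to the Moser step one may interpolate the generating functions of $F$ and of $\operatorname{id}$, but the smallness of the perturbation still requires the same H\"older estimate.)

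\emph{Main obstacle.} The delicate point is the conservative correction: a bare cut-off destroys area-preservation, and the naive Moser correction is controlled by the $C^1$-size of the Jacobian error, which stays $O(1)$ and does not go to zero. The remedy — running Moser's argument in H\"older spaces and exploiting $\|J_\rho-1\|_{C^{0,\alpha}}=O(\rho^{1-\alpha})\to 0$ — is the heart of the lemma; it simultaneously explains why the conclusion is only $C^{1+\alpha}$ (one derivative is lost because only the H\"older norm of $J_\rho-1$ is small, not its $C^1$ norm) and why the hypothesis $f\in C^2$ is needed (so that $J_\rho$ is Lipschitz with a controlled constant, hence small in $C^{0,\alpha}$).
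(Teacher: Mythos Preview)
The paper does not prove this statement at all: it is quoted verbatim as the ``weak Pasting Lemma'' and attributed to Arbieto--Matheus \cite[Theorem~3.6]{AM}, with no argument given. So there is no ``paper's own proof'' to compare with; your sketch is in fact a reconstruction of the Arbieto--Matheus argument itself, and it follows their strategy faithfully: localise, linearise by a bump cut-off (which destroys the volume condition), then restore area-preservation via a Dacorogna--Moser correction controlled in H\"older norm. Your explanation of why only the $C^{0,\alpha}$ norm of the Jacobian defect tends to zero (and not its $C^1$ norm), and why this forces $C^{1+\alpha}$ regularity in the conclusion, is the essential content of the lemma and is correctly identified.

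One small point to tighten: you assert that the Dacorogna--Moser solution $\Psi_\rho$ can be taken equal to the identity on an inner ball $B_{\rho'}\subset B_{\rho/2}$. The standard Dacorogna--Moser statement on a ball only gives $\Psi_\rho=\operatorname{id}$ on the \emph{outer} boundary. To get the identity on an inner region you should instead solve the problem on the annulus $B_\rho\setminus \overline{B_{\rho/2}}$ with identity boundary data on both components; this is legitimate because $J_\rho\equiv 1$ on $B_{\rho/2}$, so the mass-balance condition $\int_{B_\rho\setminus B_{\rho/2}} J_\rho = \operatorname{area}(B_\rho\setminus B_{\rho/2})$ follows from the global balance you already established. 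With that adjustment the argument is complete.
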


\begin{remark}
Observe that in Theorem~\ref{WPL} we required that, in the small neighborhood $V$ the perturbation is equal to the tangent map, and thus we get $C^1$-closeness. If we choose any linear map in $V$ the conservative construction also holds but, of course, we loose the  $C^1$-closeness and instead we obtain $C^0$-closeness as long as $V$ is very small.
\end{remark}

The next simple result will be very useful to prove Theorem~\ref{main}. The subtle step is to be careful to perform the perturbations without leaving the area-preserving setting.

\begin{lemma}\label{NA}
Given any area-preserving Anosov diffeomorphism $f\colon S\rightarrow S$ and $\epsilon>0$, there exists a non-Anosov area-preserving diffeomorphism $g$ such that $g$ is $\epsilon$-$C^0$-close to $f$.
\end{lemma}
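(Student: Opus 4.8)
The plan is to destroy the Anosov property by a localized $C^0$-small perturbation that creates an elliptic (or at least non-hyperbolic) periodic orbit, while staying inside $\text{Diff\,}_\lambdaup(S)$. First I would fix a periodic point $p$ of the Anosov diffeomorphism $f$, say of period $k$, and pass to the first-return map $F:=f^k$, which fixes $p$; in a conservative Darboux chart around $p$ the derivative $DF_p$ is a hyperbolic matrix in $SL(2,\RR)$. Since $f$ is of class $C^\infty$ (an Anosov diffeomorphism of a surface is smooth in the relevant sense, or at least $C^2$), I may invoke the weak Pasting Lemma (Theorem~\ref{WPL}) in the following strengthened form noted in the remark after it: for arbitrarily small neighborhoods $V\subset U$ of $p$ and any prescribed linear map $L\in SL(2,\RR)$, there is a conservative diffeomorphism $g$ with $g=f$ off $U$ and $g|_V=L$ in the chart, and $g$ is $C^0$-close to $f$ provided $U$ (hence $V$) is chosen small enough. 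The point is that replacing $DF_p$ by an arbitrary $L$ costs us $C^1$-closeness but only $C^0$-closeness, and $C^0$-closeness is all we need.

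The key steps, in order, are: (1) choose the periodic point $p$ and reduce to the return map $F=f^k$ fixing $p$; (2) choose $L\in SL(2,\RR)$ to be an elliptic matrix — e.g. a rotation by an angle close to $0$ — so that $p$ becomes a non-hyperbolic (indeed elliptic) fixed point of the perturbed return map; (3) apply Theorem~\ref{WPL} with this $L$ to produce $g_0\in\text{Diff\,}_\lambdaup(S)$, equal to $f$ outside a small neighborhood $U$ of $p$ and equal to $L$ on a smaller $V$, with $d(g_0,f)<\epsilon$ once $U$ is small; (4) observe that the corresponding perturbation $g$ of $f$ itself (so that $g^k=g_0$ near $p$, obtained by performing the modification only on the last return to the neighborhood) has a periodic point whose derivative along the orbit is conjugate to $L$, hence has both eigenvalues on the unit circle, so $g$ is not Anosov — an Anosov diffeomorphism has a hyperbolic splitting at every point, in particular a nonzero Lyapunov exponent at every periodic orbit. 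Care must be taken in step (4) to insert the linear block on only one of the $k$ iterates of the neighborhood and to keep the iterates $f^j(\ov U)$, $j=0,\dots,k-1$, pairwise disjoint, which is possible since $p$ is not fixed by $f$ unless $k=1$ (and if $k=1$ the argument is direct).

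The main obstacle I expect is bookkeeping in the conservative category: the weak Pasting Lemma as stated modifies a single application of the map near a point, so to affect the return map $F=f^k$ one must interpose the linear block at exactly one step of the cycle $p, f(p),\dots, f^{k-1}(p)$ and verify that $Dg$ along the perturbed orbit equals (a conjugate of) $DF_{f^{k-1}(p)}\cdots DF_{f(p)}\cdot L$ up to the ambient derivatives, which are themselves unchanged — so the relevant product of derivatives around the orbit is $L$ composed with the old hyperbolic return cocycle. Choosing instead $L$ to be the \emph{inverse} of that old hyperbolic part times an elliptic matrix makes the new return derivative exactly elliptic; since $L$ ranges over all of $SL(2,\RR)$ this is allowed. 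A secondary technical point is ensuring $d(g,f)<\epsilon$ for both $g$ and $g^{-1}$ in the metric $d$, which again follows by taking the support neighborhood $U$ sufficiently small, since $g$ differs from $f$ only on $U$ and its preimage. Once the elliptic periodic point is produced, non-Anosovness is immediate.
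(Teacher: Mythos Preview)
Your approach is essentially the paper's: use the remark after Theorem~\ref{WPL} to paste in a non-hyperbolic linear map near a periodic point, producing a conservative $C^0$-small perturbation that is not Anosov. Two points of comparison are worth making. First, your claim that an Anosov surface diffeomorphism is automatically $C^2$ is not correct; the lemma is stated for $f\in\text{Diff\,}_\lambdaup(S)$, which in this paper means $C^1$, and the weak Pasting Lemma needs $C^2$. The paper handles this by first applying Zehnder's smoothing theorem to replace $f$ by a $C^\infty$ area-preserving $f_1$ that is $C^1$-close (hence still Anosov, with a nearby fixed point by structural stability), and only then invoking Theorem~\ref{WPL}. Second, the paper avoids all of your step~(4) bookkeeping by working directly at a \emph{fixed} point of $f$ (any Anosov surface diffeomorphism has one) and choosing the pasted linear map to be the identity; then $g=\mathrm{id}$ on a small disk, so $Dg=\mathrm{id}$ there and non-Anosovness is immediate, with no need to track a length-$k$ cocycle or solve for $L$ against the remaining hyperbolic factors.
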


\begin{proof}
Take the fixed point for $f$ and denote it by $p\in S$. Pick, using the smoothing result by Zehnder (\cite{Z}), $f_1\in \text{Diff\,}_{\lambdaup}(S)$ of class $C^\infty$ and let $p_1$ be the fixed point for $f_1$ given by the analytic continuation of $p$. Choose a small neighborhood $\mathcal{V}_1$ of $p_1$. Now, using Theorem~\ref{WPL}, it is easy to built a diffeomorphism $f_2$ such that:
\begin{enumerate}
\item $f_2$ is area-preserving,
\item $f_2=id$ ($id=$Identity) inside a small compact contained in $\mathcal{V}_1$,
\item $f_2=f_1$ outside a very small open set $\mathcal{U}\supset \mathcal{V}_1$ and
\item $f_2$ is $\epsilon$-$C^0$-close to $f$ (although $C^1$-far from $f$).
\end{enumerate}
Take $g=f_2$, $g$ cannot be an Anosov diffeomorphism and the lemma is proved.

\end{proof}

\begin{proof}(of Theorem~\ref{main})
We will prove that given $h\in\text{Hom\,}_\lambdaup (S)$ and $\epsilon>0$, there exists $f\in\text{Hom\,}_\lambdaup (S)$ such that $\lambdaup$-a.e. point $x\in S$ satisfy $\chi^+_N(f,x)=0$ and $f$ is $\epsilon$-$C^0$-close to $h$. First,  by using (\cite{Oh,S}), consider $f_1\in \text{Diff\,}_{\lambdaup}(S)$ such that $f_1$ is $\frac{\epsilon}{3}$-$C^0$-close to $h$.

We assume that $f_1$ is far from the set of area-preserving Anosov diffeomorphisms, otherwise, by using Lemma~\ref{NA}, we can guarantee the existence of $f_2\in \text{Diff\,}_{\lambdaup}(S)$ such that $f_2$ is $\frac{\epsilon}{3}$-$C^0$-close to $f_1$ and $f_2$ is $C^0$-far (thus $C^1$-far) from the Anosov area-preserving diffeomorphisms. Then, using Bochi-Ma\~n\'e theorem (\cite{Boc}) we obtain $f_3\in \text{Diff\,}_{\lambdaup}(S)$ such that $f_3$ is $\frac{\epsilon}{3}$-$C^0$-close to $f_2$ and $f_3$ has $\chi^+(f_3,x)=0$ for $\lambdaup$-a.e. $x\in S$. Finally, since $f_1$ is differentiable, we use Theorem~\ref{main0} and we obtain that $\chi^+(f_3,x)=\chi_N^+(f_3,x)=0$ for $\lambdaup$-a.e. $x\in S$ and the theorem is proved by taking $g=f_3$.

\end{proof}

\begin{remark}
Of course that Theorem~\ref{main} also holds if we consider the standard Lyapunov exponents instead of our top \emph{new} Lyapunov exponents. However, it is misleading to state something about an object (the standard Lyapunov exponents) among a setting $\text{Hom\,}_\lambdaup (S)$ where they are not even defined.
\end{remark}

\medskip

We finish this section with the following question.

\medskip

\begin{question}
Does Theorem~\ref{main} also holds if we switch our top new exponents by the ones in ~\cite{BS}?
\end{question}

\section{Proof of Theorem~\ref{main2}}\label{Beguin}

Recall the function

$$
\begin{array}{cccc}
\Lambda\colon & \text{Hom\,}_\lambdaup (S) & \longrightarrow & [0,+\infty[ \\
& h & \longmapsto & \int_S \chi_N^{+}(h,x)d\lambdaup(x),
\end{array}
$$
where $\text{Hom\,}_\lambdaup (S)$ is endowed with the $C^0$-topology and $\chi_N^{+}$ is the top \emph{new} Lyapunov exponent (as in \S\ref{NLE}). We know that $h\in \text{Hom\,}_\lambdaup (S)$ has zero \emph{new} Lyapunov exponents for $\lambdaup$-a.e. point in $S$, if and only if $\Lambda(h)=0$.

In  \cite[Proposition 2.1]{Boc} it was proved that
$$
\begin{array}{cccc}
\hat{\Lambda}\colon & \text{Diff\,}_{\lambdaup}(S) & \longrightarrow & [0,+\infty[ \\
& f & \longmapsto & \int_S \chi^{+}(f,x)d\lambdaup(x),
\end{array}
$$
where $\text{Diff\,}_{\lambdaup}(S)$ is endowed with the $C^1$-topology is an upper-semicontinuous function, hence the points of continuity of $\hat{\Lambda}$ forms a residual subset of $\text{Diff\,}_{\lambdaup}(S)$ (see~\cite{F}) and this was the key step to obtain Bochi-Ma\~n\'e theorem. Unfortunately, in the topological context, such a good property of upper-semicontinuity does not holds, and thus, is unattainable to go on with the strategy in \cite{Boc}. We would like to thank Fran\c{c}ois B\'eguin for suggesting us the construction in the proof of the next result.

\begin{maintheorem}
  \label{main2}
The function $\Lambda$ is not upper-semicontinuous.
\end{maintheorem}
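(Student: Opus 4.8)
The plan is to show that $\Lambda$ fails to be upper-semicontinuous by exhibiting a homeomorphism $h_0$ with $\Lambda(h_0)=0$ that is a $C^0$-limit of homeomorphisms $h_k$ with $\Lambda(h_k)$ bounded below by a fixed positive constant. Since upper-semicontinuity at $h_0$ would force $\limsup_k \Lambda(h_k) \le \Lambda(h_0) = 0$, any such example does the job. The natural choice for $h_0$ is something dynamically trivial on a large region — for instance the identity, or a map that is the identity on a ball $B \subset S$ — so that $\chi_N^+(h_0,x) = 0$ for $\lambdaup$-a.e. $x$ and hence $\Lambda(h_0)=0$.

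First I would fix a small ball $B \subset S$ and, inside a chart, build a sequence of area-preserving homeomorphisms $\psi_k$ supported in $B$ (equal to the identity near $\partial B$ and outside $B$) such that $\psi_k \to \mathrm{id}$ in the $C^0$-topology but each $\psi_k$ contains, on a subregion $B_k \subset B$ of area bounded below independently of $k$, a piece that behaves like a linear hyperbolic map — say $\psi_k$ agrees with $\mathrm{diag}(e^{a}, e^{-a})$ (composed with the folding needed to make it a homeomorphism of $B$ that is the identity on the boundary) on $B_k$. The point is that $C^0$-smallness only constrains the size of the displacement $d(\psi_k(x), x)$, not the local expansion rate: one can have expansion $e^a$ realized over a dynamical ball of size $\sim \delta$ as long as the orbit stays confined, and by taking $B_k$ to be a thin invariant strip (or by iterating a ``horseshoe-like'' local picture confined to $B$) one arranges that for $x \in B_k$ the quantity $\Delta(f,n,x,y)$ grows like $e^{na}$ along the line $L_{x,v}$ in the expanding direction while all iterates stay within $B$. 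Concretely I would make $\psi_k$ a time-one-type map of a region on which it acts as a linear Anosov map of a small flat torus glued into $B$, rescaled so the $C^0$-norm is $< 1/k$; then $\chi_N^+(\psi_k, x) \ge a$ for $x$ in a set of measure $\ge c > 0$, giving $\Lambda(\psi_k) \ge ca > 0$ for all $k$.

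The key steps, in order: (1) reduce the statement to producing one discontinuity point, i.e. $h_k \to h_0$ in $C^0$ with $\Lambda(h_0)=0 < \liminf \Lambda(h_k)$; (2) localize everything to a chart ball $B$ where we may work in $\RR^2$ with Lebesgue measure, taking $h_0 = \mathrm{id}$; (3) construct the area-preserving homeomorphisms $\psi_k$ supported in $B$, $C^0$-small, carrying a linear hyperbolic block on a region $B_k$ of definite measure, with the confinement property that forward orbits of points of $B_k$ stay in $B$; (4) compute, for $x \in B_k$ and $v$ in the unstable direction, that $\lim_{\delta\to 0}\sup_{y\in(B_x(\delta,n)\setminus\{x\})\cap L_{x,v}}\Delta(f,n,x,y) \ge e^{na}$, whence $\chi_N^+(\psi_k,x)\ge a$; (5) integrate to get $\Lambda(\psi_k)\ge \lambdaup(B_k)\,a \ge c\,a > 0$, and conclude.

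The main obstacle I expect is step (3)–(4): reconciling \emph{$C^0$-smallness} with \emph{definite exponential growth of $\Delta$ over a dynamical ball}. The subtlety is that $\chi_N^+$ is defined via $\limsup_n \lim_{\delta\to 0}$ over points $y$ that remain $\delta$-close to $x$ under \emph{all} iterates up to time $n$; to get growth $e^{na}$ one needs genuinely $n$ iterates of expansion while staying within distance $\delta$, so the hyperbolic block must be a \emph{bona fide invariant} (or long-time-trapped) piece of the map $\psi_k$, not just a one-step shear. Handling the gluing of a linear Anosov model into $B$ so that it is an area-preserving homeomorphism of $S$, equal to the identity off $B$, and keeping the relevant orbit segments inside $B$ (so the estimates are valid) while still sending $\psi_k \to \mathrm{id}$ uniformly — that is where the real care is needed; the rest is bookkeeping and a direct computation of $\Delta$ for a linear map.
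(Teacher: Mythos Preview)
Your overall strategy is correct --- exhibit a discontinuity of $\Lambda$ at the identity by producing $g_n \to \mathrm{id}$ in $C^0$ with $\Lambda(g_n)$ bounded below by a fixed positive constant --- and this is exactly what the paper does. But your construction has a real gap at the point you yourself flag. If you rescale a \emph{single} hyperbolic model so that its $C^0$-distance to the identity is $<1/k$, then the support of the nontrivial dynamics must have diameter $O(1/k)$: for the linear map $\mathrm{diag}(e^a,e^{-a})$ the displacement at $x$ is $\sim (e^a-1)\,|x|$, so any invariant region on which it acts must shrink. Hence its area is $O(1/k^2)$, and $\Lambda(\psi_k)\to 0$. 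Neither a ``thin invariant strip'' nor a ``glued flat torus'' escapes this arithmetic (and the latter is additionally topologically awkward: a $2$-torus does not sit as an invariant sub-piece of a homeomorphism of a disc that is the identity near the boundary). So as written you cannot have both $\psi_k\to\mathrm{id}$ in $C^0$ and $\lambdaup(B_k)\ge c>0$ for a single region $B_k$.

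The paper resolves this with two ingredients you are missing. First, instead of building a hyperbolic block by hand, it invokes Katok's construction: a $C^\infty$ ergodic area-preserving diffeomorphism $\kappa$ of the closed disc $\mathcal{D}$ with $\chi^+(\kappa,x)=\chi>0$ for a.e.\ $x$. This makes your step~(4) automatic via Theorem~\ref{main0}, since the map is differentiable and the new exponents coincide with the classical ones; moreover the classical exponents are unchanged under conjugation by a homothety. Second --- and this is the key trick --- it packs $n^2$ pairwise disjoint discs $\mathcal{D}_i$ of radius $\sim 1/n$ into $S$ and puts a rescaled copy $\ell_i\circ\kappa\circ\ell_i^{-1}$ of $\kappa$ in each. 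Each copy is $O(1/n)$-$C^0$-close to the identity, but the \emph{total} area of the discs is $n^2\cdot \pi(k/10n)^2=\pi k^2/100$, independent of $n$. Thus $\Lambda(g_n)=\chi\cdot \pi k^2/100$ for every $n$ while $g_n\to\mathrm{id}$, giving the jump. The packing decouples the $C^0$-smallness (controlled by the radius of one disc) from the measure carrying positive exponent (controlled by the \emph{number} of discs), which is precisely the decoupling your single-region scheme cannot achieve.
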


\begin{proof}

Let $S$ be a compact surface. We will prove that $\Lambda$ is not upper-semicontinuous at $id\colon S\rightarrow S$.

Let $\mathcal{D}\subset S$ be a disc of radius equal to $1$. By Katok's construction (see~\cite{Ka}) there exists a $C^\infty$ diffeomorphism $\kappa\colon \mathcal{D}\rightarrow \mathcal{D}$ such that $\kappa$ is an ergodic area-preserving diffeomorphism in $\mathcal{D}$ and has non-zero Lyapunov exponents for $\lambdaup$-a.e. $x\in \mathcal{D}$ say equal to $\chi>0$ and $-\chi$.

On a surface $S$, consider a family of $n^2$ pairwise disjoint discs $\{\mathcal{D}_i\}_{i=1}^{n^2}$ such that each disc has radius $\frac{k}{10n}$ (for some fixed and adequate $k>0$ depending on $S$). We get that the area of each disc is equal to $\frac{\pi k^2}{100n^2}$. Now, for each $i\in\{1,...,n^2\}$ we  consider the linear homothety $\ell_i\colon \mathcal{D}\rightarrow\mathcal{D}_i$ deforming $\mathcal{D}$ into $\mathcal{D}_i$. Finally, construct an area-preserving homeomorphism $g_n\colon S\rightarrow S$ such that:

\begin{enumerate}
\item $g_n=id$ outside $\bigcup_{i=1}^{n^2}\mathcal{D}_i$ and
\item $g_n=\ell_i\circ \kappa\circ\ell_i^{-1}$ on each $\mathcal{D}_i$.
\end{enumerate}

We observe that $g_n$ is $\frac{k}{10n}$-$C^0$-close to $id$. Now,

\begin{eqnarray*}
\Lambda(g_n)&=&\int_S \chi^+_N(g_n,x)d\lambdaup(x)=\int_{\bigcup_{i=1}^{n^2}\mathcal{D}_i} \chi^+_N(g_n,x)d\lambdaup(x)\\&=&\int_{\bigcup_{i=1}^{n^2}\mathcal{D}_i} \chi^+(g_n,x)d\lambdaup(x)=\int_{\bigcup_{i=1}^{n^2}\mathcal{D}_i} \chi d\lambdaup(x)\\&=&\chi\lambdaup\left(\cup_{i=1}^{n^2}\mathcal{D}_i\right)=\chi\sum_{i=1}^{n^2}\lambdaup(\mathcal{D}_i)=\chi\frac{\pi\, k}{100}
\end{eqnarray*}

Finally, we observe that $\Lambda(g_n)= \chi\frac{\pi\, k}{100}$ (for any $n$), but on the other hand $g_n\rightarrow id$ (in the $C^0$-sense) as $n\rightarrow\infty$, thus there is ``jump'' for $\Lambda$ at $id$ and so $\Lambda$ cannot be upper-semicontinuous.

\end{proof}

\subsection*{Acknowledgments}
MB was partially supported by FCT - Funda\c{c}\~ao para a Ci\^encia e a Tecnologia through CMUP (SFRH/BPD/20890/2004) and CS was partially supported by FCT through CMUBI (FCT/POCI2010/FEDER). CS would like to thank Lu\'is Barreira for fruitful conversations aiming the definition of the exponents for continuous transformations.


\def\cprime{$'$}

\end{document}